\documentclass[11pt,a4paper]{article}

\usepackage[T1]{fontenc}
\usepackage{lmodern}
\usepackage[utf8]{inputenc}

\usepackage[left=2.45cm, top=2.45cm,bottom=2.45cm,right=2.45cm]{geometry}
\usepackage{amsmath}
\usepackage{amssymb,amsthm,graphicx,xcolor,tikz}

\usepackage[british]{babel}
\usepackage{bbm}

\usepackage[
	bookmarks=true,
	bookmarksnumbered=true,
	bookmarksopen=true,
	unicode=true,
	pdftoolbar=true,
	pdfmenubar=true,
	pdffitwindow=false,
	pdfstartview={FitH},
	pdftitle={},
	pdfauthor={},
	pdfsubject={},
	pdfcreator={},
	pdfproducer={},
	pdfkeywords={},
	pdfnewwindow=true,
	colorlinks=true,
	linkcolor=black,
	citecolor=black,
	filecolor=black,
	urlcolor=black]{hyperref}

\makeatletter
	\def\@cite#1#2{[\textbf{#1}\if@tempswa , #2\fi]}	
	\def\@biblabel#1{[#1]}								
\makeatother

\numberwithin{equation}{section}



\newtheorem {theorem}{Theorem}[section]

\newtheorem {lemma}[theorem]{Lemma}

\theoremstyle{definition}
\newtheorem{definition}[theorem]{Definition}
\newtheorem {remark}[theorem]{Remark}
\newtheorem {example}[theorem]{Example}


\newcommand{\EE}{\mathbb{E}}

\newcommand{\NN}{\mathbb{N}}
\newcommand{\PP}{\mathbb{P}}
\newcommand{\RR}{\mathbb{R}}


\DeclareMathOperator{\Vol}{Vol}

\DeclareMathOperator{\Var}{Var}

\DeclareMathOperator{\Kol}{Kol}

\DeclareMathOperator{\conv}{conv}

\DeclareMathOperator{\bd}{bd}

\DeclareMathOperator{\Vis}{Vis} 

\newcommand{\dint}{\mathrm{d}}
\newcommand{\id}{\mathbf 1}   




\usepackage{accents}
\DeclareMathSymbol{\widetildesym}{\mathord}{largesymbols}{"65}

\newcommand{\tilf}{\tilde{f}}
\newcommand{\tV}{ \widetilde{V} }

\makeatletter
\let\@fnsymbol\@alph
\makeatother


\begin{document}

\title{\bfseries Random Inscribed Polytopes\\ in Projective Geometries}
\author{%
    Florian Besau\footnotemark[1]%
    \and Daniel Rosen\footnotemark[2]%
    \and Christoph Th\"ale\footnotemark[3]%
}

\date{}
\renewcommand{\thefootnote}{\fnsymbol{footnote}}
\footnotetext[1]{%
    Vienna University of Technology, Austria. Email: florian.besau@tuwien.ac.at
}

\footnotetext[2]{%
    Ruhr University Bochum, Germany. Email: daniel.rosen@rub.de
}

\footnotetext[3]{%
    Ruhr University Bochum, Germany. Email: christoph.thaele@rub.de
}

\maketitle

\begin{abstract}\noindent
We establish central limit theorems for natural volumes of random inscribed polytopes in projective Riemannian or Finsler geometries. In addition, normal approximation of dual volumes and the mean width of random polyhedral sets are obtained. We deduce these results by proving a general central limit theorem for the weighted volume of the convex hull of random points chosen from the boundary of a smooth convex body according to a positive and continuous density in Euclidean space. In the background are geometric estimates for weighted surface bodies and Berry--Esseen bounds for functionals of independent random variables.

    \smallskip\noindent
    \textbf{Keywords.} Central limit theorem, dual Brunn--Minkowski theory, hyperbolic geometry, inscribed polytopes, non-Euclidean geometry, projective Finsler geometry, random polytope, spherical geometry, surface body, weighted random polytope.

    \smallskip\noindent
    \textbf{MSC 2010.} Primary  52A22, 52A55; Secondary 58B20, 60D05, 60F05.
\end{abstract}

\section{Introduction and main results}

\subsection{Background}
The theory of random convex hulls has a long history, going back to Sylvester's famous four-point problem \cite{Syl}. Since the seminal papers of R\'{e}yni and Sulanke \cite{RS63,RS64}, it has become a mainstream research topic in convex, stochastic and integral geometry, with connections to asymptotic geometric analysis, optimization or multivariate statistics, to name just a few. 

In this article we focus on the convex hull of independent and identically distributed points taken from the boundary of a fixed convex body $K$. This model of a so-called \emph{random inscribed polytope} in $K$ was investigated in \cite{BFH,BMT,Rei02,Rei03,RVW07,RVW,SchWer, TurWes}, mainly from an asymptotic point of view (as the number of points tends to infinity). In particular, in \cite{Tha} a central limit theorem is proven for the volume of the random inscribed polytope inside a sufficiently smooth convex body in Euclidean space. Our goal is to generalize this result to the setting of non-Euclidean geometries. Of particular interest are the cases of random inscribed polytopes inside convex bodies in spherical or hyperbolic geometry. This continues a recent trend in stochastic geometry of generalizing known results to the non-Euclidean setting, 
and in particular to spherical and hyperbolic geometry, see e.g.\ \cite{BHRS,BHPS,DHT,GodlandKabluchko,HHT,HugRei,HugSchneider,HugSchneiderThreshold,HugTha, KabluchkoHalfSphere,KabluchkoThaele19,KabluchkoThaele20}.

More generally, we work with \emph{projective} Finsler geometries, i.e., ones for which geodesics are affine line segments. These are the Finsler solutions of Hilbert's fourth problem, and have been studied intensively, see e.g.\ \cite{AP05,Bus74,Papa, Pog}. Since on a Finsler manifold there is no canonical volume measure, we establish our results for a general \emph{definition of volume}, which is an assignment of Finsler volume measure obeying some natural axioms \cite{APT}. 

\smallskip 
Following the ideas of \cite{BLW}, we reformulate the problem in terms of \emph{weighted} random inscribed polytopes in Euclidean space. This approach is more general and also paves the way to some new directions. For example, it allows us to prove central limit theorems for dual volumes, which are central to Lutwak's dual Brunn--Minkowski theory \cite{Lut75,Lut79}, as well as for the mean width of random polyhedral sets circumscribing a convex body. Finally, let us also mention that the analogous result for the random model where points are distributed inside the convex body was proven for the Euclidean case in \cite{Reitzner:2005}, and were recently generalized to the non-Euclidean setting in \cite{ThaBes}. 

\subsection{Random inscribed polytopes in projective Riemannian geometries}\label{subsec:Riemannian}

We begin with the setting of Riemannian geometry. In this case there is a canonical notion of volume--the \emph{Riemannian volume measure}. It may be defined as the $d$-dimensional Hausdorff measure of the associated metric space of the $d$-dimensional Riemannian manifold $(\Omega,g)$, or equivalently, as the integral of the Riemannian volume density, which in local coordinates reads
\begin{equation}\label{eq:riem-vol-density}
    \sqrt{\det(g_{ij}(x))} \,|dx_1 \wedge \ldots  \wedge dx_d|,
\end{equation} where $g_{ij}(x)$ are the metric coefficients in the given coordinates (see, e.g., \cite[\S 5.5.1]{BBI}).

A Riemannian metric on a domain $\Omega\subset\RR^d$ is called \emph{projective} if affine line segments are geodesics. 
We consider a projective $C^2$-Riemannian metric $g$ on a convex domain $\Omega \subset \RR^d$. The regularity assumption ensures uniqueness of geodesics, so in particular affine line segments are the only geodesics of $g$.


Let $K \subset \Omega$ be a convex body of class $C^2_+$, that is, the boundary $\bd K$ of $K$ is a $C^2$-smooth hypersurface with everywhere strictly positive Gauss--Kronecker curvature. Denote by  $\Phi_g$ the Riemannian volume measure on $K$, and by  $\sigma_g$ the normalized Riemannian surface measure on $\bd K$. Let $X_1,X_2,\ldots$ be a sequence of independent random points on $\bd K$ distributed according to $\sigma_g$ and for $n\geq d+1$ define their convex hull $K_g(n):=[X_1, \ldots, X_n]$, which is what we call a \emph{random Riemannian inscribed polytope}.

\begin{theorem}\label{cor:Riemann}
Under the above assumptions, the Riemannian volume $\Phi_g(K_g(n))$ satisfies a central limit theorem, that is,
	\begin{equation*}
	\frac{\Phi_g(K_g(n)) - \EE \Phi_g(K_g(n)) }{\sqrt{\Var \Phi_g(K_g(n)) }} \overset{d}{\longrightarrow} Z \qquad \text{as $n\to \infty$},
	\end{equation*}
	where $Z$ is a standard Gaussian random variable. Here $\overset{d}{\longrightarrow}$ denotes convergence in distribution.
\end{theorem}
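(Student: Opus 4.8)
The plan is to reduce the Riemannian statement to a central limit theorem for a weighted random inscribed polytope in Euclidean space, which is the general result announced in the introduction (following the scheme of \cite{BLW,Tha}). First I would use the projectivity of $g$: since affine line segments are geodesics, the geodesic convex hull of the random points $X_1,\dots,X_n$ inside $\Omega$ coincides with their Euclidean convex hull $[X_1,\dots,X_n]=K_g(n)$. This is the crucial structural point that makes the problem amenable: the combinatorial object is unchanged, and only the way we measure it (and sample from the boundary) is twisted by the metric.

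Next I would rewrite the Riemannian volume and the Riemannian surface measure in terms of Euclidean quantities. By \eqref{eq:riem-vol-density}, $\Phi_g$ has a density $\varphi(x)=\sqrt{\det(g_{ij}(x))}$ with respect to Lebesgue measure on $\Omega$, and since $g$ is $C^2$ and $K\subset\Omega$ is compact, $\varphi$ is positive and continuous (indeed $C^2$) on $K$. Likewise $\sigma_g$ has a positive continuous density $\psi$ with respect to the Euclidean surface measure on $\bd K$: in local boundary coordinates the Riemannian surface element is $\sqrt{\det(\overbar g_{ij})}$ times the coordinate form, where $\overbar g$ is the restriction of $g$ to the tangent hyperplane, again positive and continuous because $\bd K$ is $C^2$-smooth and $g$ is $C^2$. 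Thus, after normalising $\psi$ to be a probability density, the pair $(\Phi_g(K_g(n)),\sigma_g)$ is exactly an instance of the weighted model: points sampled from $\bd K$ with positive continuous density $\psi$, and the functional is the $\varphi$-weighted volume $\int_{[X_1,\dots,X_n]}\varphi\,\dint x$ of their convex hull.

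With this identification in hand, the central limit theorem for $\Phi_g(K_g(n))$ follows immediately by invoking the general weighted CLT (the main theorem of the paper, stated later in the introduction), whose hypotheses — $K$ of class $C^2_+$, sampling density and weight density positive and continuous on $\bd K$ respectively on $K$ — are all verified above. One should also record that the normalisation is non-degenerate, i.e.\ $\Var\Phi_g(K_g(n))>0$ for all large $n$, so that the statement is non-vacuous; this is part of the conclusion of the general theorem (the variance is of the known order $n^{-(d+3)/(d+1)}$ up to constants), and there is nothing extra to prove here.

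The main obstacle is therefore not in this corollary at all but is already discharged by the general theorem: establishing the weighted Euclidean CLT requires the geometric estimates for weighted surface bodies and the Berry--Esseen bounds for functionals of independent random variables mentioned in the abstract. Within the present reduction, the only genuinely delicate points are (i) checking carefully that the local-coordinate expressions for $\Phi_g$ and $\sigma_g$ glue to a globally well-defined positive continuous density on the compact sets $K$ and $\bd K$ — this uses compactness of $K$ together with the $C^2$ regularity of both $g$ and $\bd K$ to obtain two-sided bounds $0<c\le\varphi,\psi\le C<\infty$ — and (ii) confirming that the projectivity hypothesis is used in exactly the right place, namely to identify geodesic convex hulls with affine ones; the $C^2$ regularity of $g$ guarantees uniqueness of geodesics, so no spurious non-affine geodesic can enlarge the hull. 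Once these are in place the proof is a one-line application of the general result.
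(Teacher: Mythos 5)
Your proposal is correct and takes essentially the same route as the paper: the paper's proof of Theorem \ref{cor:Riemann} likewise reduces to the weighted Euclidean CLT (Theorem \ref{thm:CLT}) by using the local expression \eqref{eq:riem-vol-density} to see that $\Phi_g$ and $\sigma_g$ differ from the Lebesgue measure on $K$ and the $(d-1)$-dimensional Hausdorff measure on $\bd K$ by positive continuous (indeed $C^1$) densities. Only a cosmetic slip: the variance order in your aside should be $n^{-(d+3)/(d-1)}$, not $n^{-(d+3)/(d+1)}$, which does not affect the argument.
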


\begin{example}[Hyperbolic geometry]
\begin{figure}[t]
    \centering
    \begin{tikzpicture}
        \clip (-5,-0.5) rectangle (5,4.5);
        \node at (0,0) {\includegraphics[width=0.8\textwidth]{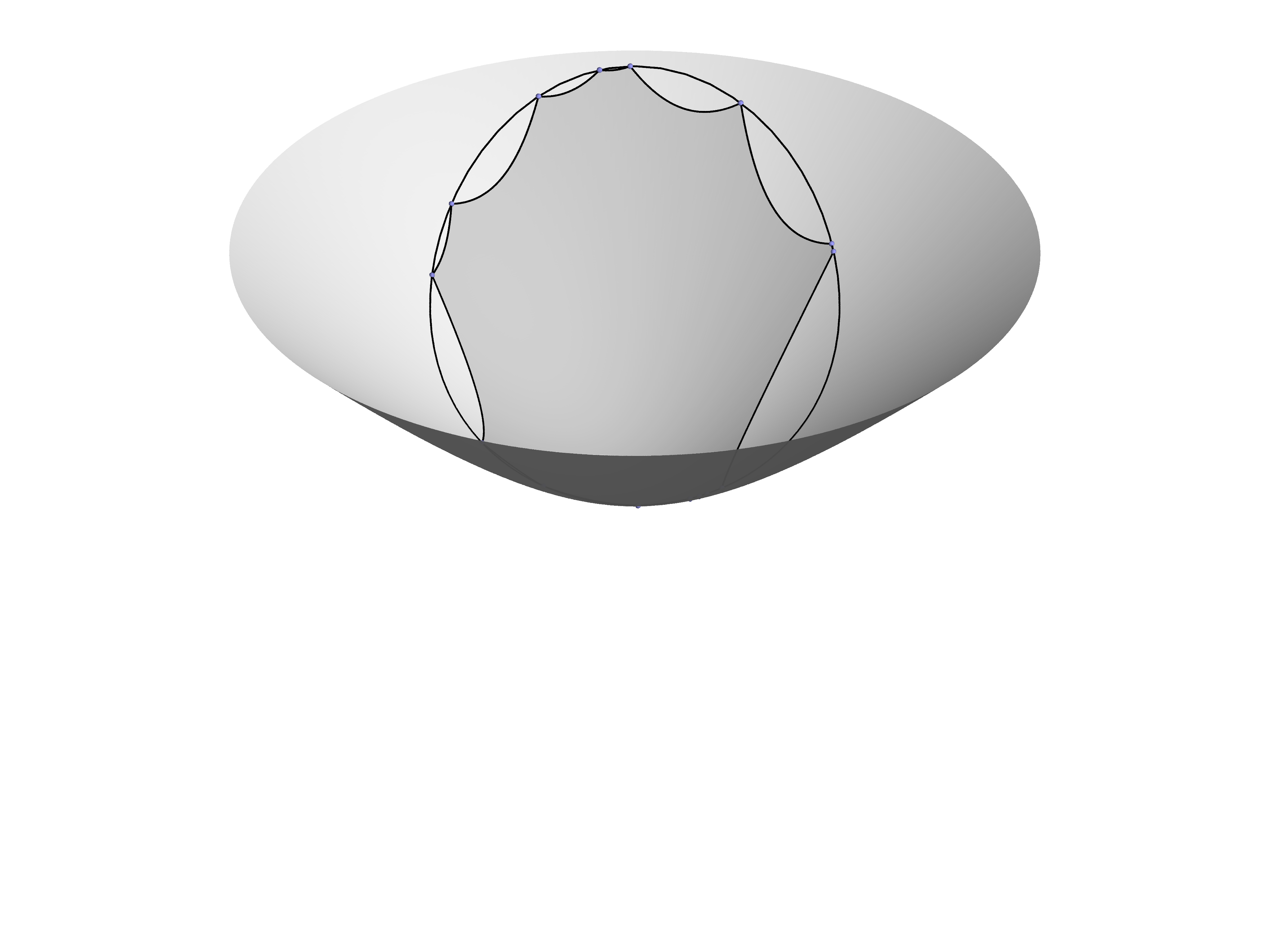}};
        \node at (2.5,2) {$K$};
        \node at (0 ,3) {$K_h(n)$};
    \end{tikzpicture}
    \caption{Illustration of the hyperbolic inscribed random polytope $K_h(n)$ generated in a hyperbolic convex body $K$
    in the hyperbolid model of the hyperbolic plane.}
    \label{fig:hyperbolid}
\end{figure}
    The $d$-dimensional hyperbolic space is realized as a projective Riemannian space in the \emph{Beltrami--Klein} model.
 This is the unit disc $\Omega = \{x \in \RR^d \,:\, \|x\| <1  \}$ equipped with the Riemannian metric with length element
\[ds^2 =   \frac{(1-\|x\|^2 ) \|dx\|^2 + \langle x, dx \rangle^2 }{(1-\| x\|^2 )^2},\]
where we denote by $\langle \,\cdot\,, \,\cdot\, \rangle$
 and $\|\,\cdot\,\|$ the Euclidean inner product and norm, respectively, on $\RR^d$. This defines a (complete) projective Riemannian metric of constant sectional curvature $-1$ (see e.g. \cite{AVS, CFKP} for more details, and relations with other models of hyperbolic space). Then Theorem \ref{cor:Riemann} implies that the hyperbolic volume of the random polytope generated by independent points on the boundary of a hyperbolic convex body obeys a central limit theorem. Figure \ref{fig:hyperbolid} illustrates a random inscribed polytope in the hyperboloid model of the hyperbolic plane.

\end{example}

\begin{example}[Spherical geometry]
	The spherical geometry in a hemisphere may also be realized in a projective model. \emph{The gnomonic projection} maps the upper hemisphere $S^d_+ := \{x \in S^{d} : x_{d+1} > 0\} \subset \RR^{d+1}$ onto its tangent hyperplane at the north pole, $H:=\{x_{d+1}=1\}$, by projecting along rays emanating from the origin (see Figure \ref{fig:gnomonic}). To be more precise, the point $x=(x_1,\dotsc, x_{d+1}) \in S^d_+$ is mapped to the point $p(x)=(\frac{x_1}{x_{d+1}}, \dotsc, \frac{x_d}{x_{d+1}}) \in \RR^d$, where we have identified $H$ with $ \RR^d$ by means of an isometry mapping the north pole of $S^d$ to the origin of $\RR^d$. The standard Riemannian metric on the hemisphere $S^d_+$ is identified with the Riemannian metric on $\RR^d$ with length element
	\[
	ds^2 =  \frac{(1+\|x\|^2 ) \|dx\|^2 - \langle x, dx \rangle^2 }{(1+\| x\|^2 )^2}.
	\]	
	\begin{figure}[t]
	\centering
	\begin{tikzpicture}
	\clip (-3.5,-2.5) rectangle (3.5,2.7);
	\node at (0,0) {\includegraphics[width=7cm]{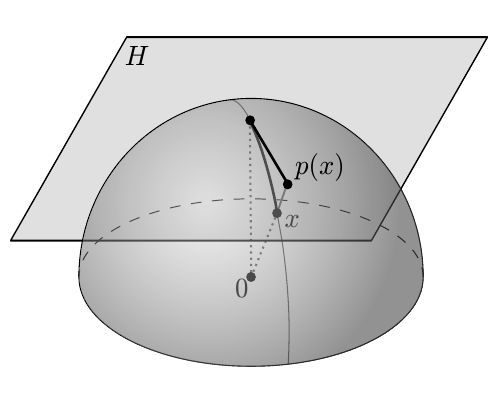}};
	\end{tikzpicture}
	\caption{The gnomonic (central) projection from the upper hemisphere.}\label{fig:gnomonic}
\end{figure}
	This defines a projective Riemannian metric on $\Omega = \RR^d$ with constant sectional curvature $+1$. Theorem \ref{cor:Riemann} then implies that the spherical  volume of the random polytope generated by independent points on the boundary of a spherical convex body obeys a central limit theorem. Figure \ref{fig:sphere} illustrates a random inscribed polytope on the upper hemisphere $S^2_+$.
\begin{figure}[t]
    \centering
    \begin{tikzpicture}
        \clip (-4,-1.4) rectangle (4,4);
        \node at (0,0) {\includegraphics[width=0.8\textwidth]{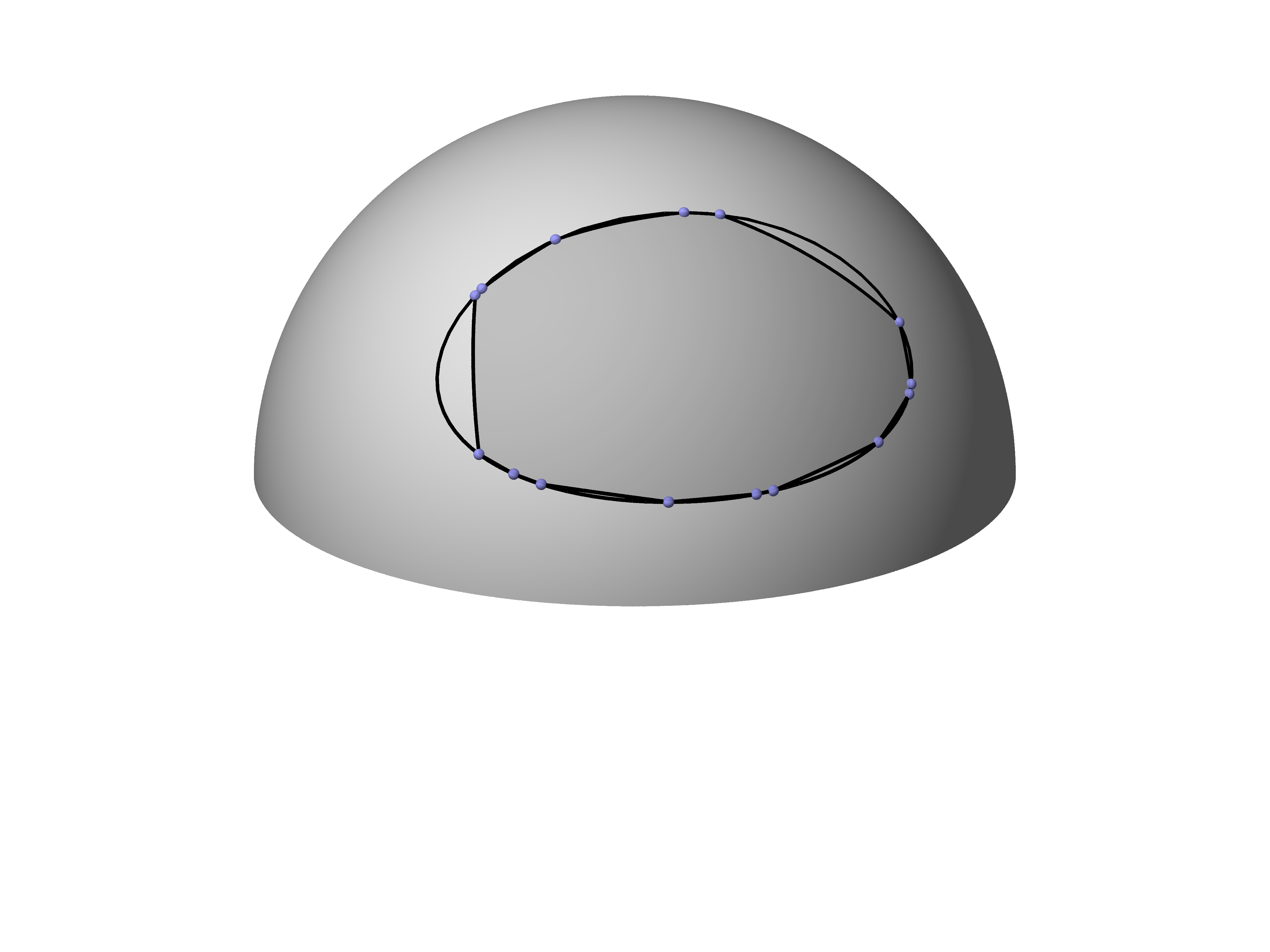}};
        \node at (-2.4,1) {$K$};
        \node at (0.8 ,1.8) {$K_s(n)$};
    \end{tikzpicture}
    \caption{Illustration of the spherical inscribed random polytope $K_s(n)$ generated in a spherical convex body $K$ contained in the open hemisphere $S^2_+$.}
    \label{fig:sphere}
\end{figure}

\end{example}

\begin{remark}
	The classical Beltrami theorem states that any projective Riemannian metric on a convex domain $\Omega \subset \RR^d$ is of constant sectional curvature, and hence locally isometric to (a rescaling of) either the Euclidean, hyperbolic, or spherical space (see, e.g., \cite{BesauWerner:2018,Bus, Mat,Sha}). Moreover, if the metric is complete and the underlying space simply connected, it is \emph{globally} isometric to one of these spaces.
\end{remark}

\subsection{Random inscribed polytopes in projective Finsler geometries}

We turn now to the more general case of projective Finsler metrics. For this we let $\Omega \subset \RR^d$ be a convex body, equipped with a Finsler metric $F$, i.e., a continuous function $F:T\Omega \to \RR$ on the tangent bundle $T\Omega$ of $\Omega$ such that for all $x \in \Omega$, $F(x,\, \cdot\,) :T_x\Omega \to \RR$ is a norm, where we write $T_x\Omega$ for the tangent space of $\Omega$ at $x$. We assume that $F$ is $C^3$-smooth away from the zero section of $T\Omega$ and is strongly convex, that is, the vertical Hessian $\frac{\partial^2 F^2}{\partial v_i \partial v_j}(x,v)$ is non-degenerate at every $v \neq 0$. We assume moreover that $F$ is \emph{projective}, that is, straight line segments are geodesics of $F$. Again, the regularity assumption on $F$ implies that these are the only geodesics.

We note that in Finsler geometry, unlike Riemannian geometry, there does not exist a canonical choice of volume measurement. However, any 'reasonable' notion of Finsler volume is completely determined by its value on normed spaces (see e.g.\ \cite[\S 5.5.3]{BBI}). A Lebesgue measure on a $d$-dimensional normed space $X$ can be described in terms of a (positive) density, that is, a norm on the ($1$-dimensional) top exterior power $\bigwedge^dX$. This leads to the following axiomatic definition due to \'{A}lvarez Paiva and Thompson \cite{APT}. 

\begin{definition}
A \emph{definition of volume} on $d$-dimensional normed spaces is an assignment to each $d$-dimensional normed space $X$ of a norm $\mu_X$ on $\bigwedge^d X$ such that the following conditions are satisfied:
\begin{enumerate}
	\item If $T:X \to Y$ is a short map (i.e., a linear map of norm $\leq 1$), the induced map $\bigwedge^d T : \bigwedge^dX \to \bigwedge^d Y$ is short as well.
	\item The assignment $X \mapsto (\bigwedge^d X, \mu_X)$ is continuous in the Banach--Mazur topology.
	\item If $X$ is a Euclidean space, $\mu_X$ is the standard Euclidean volume measure.
\end{enumerate}
\end{definition}

Given a definition of volume on a $d$-dimensional normed space, one can define a volume on a general $d$-dimensional Finsler manifolds, by the following procedure. If $(M,F)$ is a Finsler manifold, that is, a differentiable manifold $M$ together with a Finsler metric $F$ on $TM$, then for each $x \in M$ we obtain a norm $\mu_{T_xM}$ on $\bigwedge^dT_xM$. This norm varies continuously with $x$, and hence defines a continuous \emph{volume density} on $M$. Volume densities can be integrated (see e.g. \cite{BT,Nico}), yielding a volume measure on $M$. 

The following examples are taken from \cite{APT}.

\begin{example}[The Busemann definition \cite{Bus47}]
	The Busemann definition of volume of a $d$-dimensional normed space $X$ is such that the volume of the unit ball of $X$ is ${\rm Vol}_{\rm Bus}(B) = \kappa_d$, where $\kappa_d$ is the volume of the $d$-dimensional Euclidean unit ball. The corresponding density on $X$ is given by
	\begin{equation*}
	\mu_{\rm Bus}(v_1 \wedge \cdots \wedge v_d)= \frac{\kappa_d}{{\rm Vol}(B;v_1, \ldots, v_n)},
	\end{equation*}
	where ${\rm Vol}(B;v_1, \ldots, v_n)$ denotes the volume of $B$ with respect to the Lebesgue measure determined by the basis $v_1, \ldots v_d$. The resulting volume measure on a $d$-dimensional continuous Finsler manifold is known to coincide with its $d$-dimensional Hausdorff measure (see \cite[\S 6]{Bus47}.)
\end{example}

\begin{example}[The Holmes--Thompson definition \cite{HT}]
	The Holmes--Thompson volume definition uses the canonical symplectic structure on $X \times X^*$ (see e.g. \cite{APT, MS}), and the associated symplectic volume. The Holmes--Thompson volume of the unit ball $B$ of $X$ is equal to the symplectic volume of $B \times B^* \subset X \times X^*$ divided by $\kappa_d$. The corresponding density is given by 
	\begin{equation*}
	\mu_{\rm HT}(v_1 \wedge \cdots \wedge v_n) = \frac{ {\rm Vol}(B^*; \xi_1, \ldots, \xi_d) }{\kappa_d},
	\end{equation*}
	where $\xi_1, \ldots, \xi_d$ is the basis of $X^*$ dual to $v_1, \ldots,v_d$. It is known that the resulting Holmes--Thompson volume of a $d$-dimensional Finsler manifold is equal to the symplectic volume of the unit co-disc bundle $B^*(M) \subset T^*M$ with respect to the canonical symplectic structure on $T^*M$, divided by $\kappa_d$ (see e.g. \cite{APT, MS}).
\end{example}

\begin{example}[The Gromov mass and mass$^*$ definitions \cite{Gro}]
	The Gromov mass definition is such that the maximal cross-polytope inscribed in the unit ball of $X$ has volume $2^n/n!$. The corresponding density is given by 
	\begin{equation*}
	\mu_{\rm mass}(a) = \inf \|v_1\| \cdots \|v_d\|,
	\end{equation*}
	where the infimum extends over all $v_1, \ldots, v_d$ such that $a= v_1 \wedge \cdots \wedge v_d$.
	
	The dual notion is the Gromov mass$^*$ definition, for which the minimal parallelotope circumscribed about the unit ball of $X$ has volume $2^d$. The corresponding density is given by 
	\begin{equation*}
	\mu_{\rm{mass}^* }(v_1 \wedge \cdots \wedge v_d) = \left[\mu_{\rm{mass}} (\xi_1 \wedge \cdots \wedge \xi_d) \right]^{-1},
	\end{equation*}
	where $\xi_1,\ldots, \xi_d$ is the dual basis to $v_1, \ldots, v_d$, and the mass definition on the right hand side is applied to the dual space of $X$. 
\end{example}

\medskip
We now return to our setting of a projective Finsler metric on a convex domain $\Omega$. We fix a definition of volume on $d$-dimensional normed spaces, which defines a volume measure on $\Omega$, as we explained above. We denote this volume measure by $\Phi$. Now, given a convex body $K \subset \Omega$ of class $C^2_+$, fixing another definition of volume on $(d-1)$-dimensional normed spaces defines a surface measure on $\bd K$, and we denote the resulting normalized probability measure on $\bd K$ by $\sigma$. Let $X_1,X_2, \ldots$ be a sequence of independent random points on $\bd K$ distributed according to $\sigma$. For $n\geq d+1$ the convex hull $K_F(n):=[X_1, \ldots, X_n]$ is called the \emph{random inscribed Finsler polytope}.

\begin{theorem}\label{thm:Finsler}
    Let $\Omega\subset \RR^d$ be an open and convex domain, and $F$ be a projective Finsler metric on $\Omega$ that is strongly convex and $C^3$-smooth away from the zero section of $T\Omega$.
	Then the Finsler volume $\Phi(K_F(n))$ of the random Finsler polytope $K_F(n)$ satisfies a central limit theorem, that is,
	\begin{equation*}
        \frac{\Phi(K_F(n)) - \EE \Phi(K_F(n)) }{\sqrt{\Var \Phi(K_F(n))}} \overset{d}\longrightarrow Z \qquad \text{as $n\to \infty$},
	\end{equation*}
	where $Z$ is a standard Gaussian random variable.
\end{theorem}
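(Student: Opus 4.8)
The plan is to reduce Theorem~\ref{thm:Finsler} to a general central limit theorem for the weighted volume of a Euclidean random inscribed polytope --- the main technical result of the paper --- by identifying $\Phi$ and $\sigma$ as weighted versions of Lebesgue measure and of the Euclidean surface measure, respectively. The point is that projectivity turns everything Finslerian into a purely Euclidean convex-hull problem, with the metric surviving only through two continuous weight functions.

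First I would record the consequence of projectivity. Since $F$ is projective and, by the regularity assumptions, straight segments are its \emph{only} geodesics, the Finsler (geodesic) convex hull of $X_1,\dots,X_n$ coincides with their Euclidean convex hull; hence $K_F(n)=[X_1,\dots,X_n]$ is an ordinary polytope. Moreover, as $K$ is Euclidean-convex and contains the points $X_i$, we have $K_F(n)\subset K\subset\Omega$, so $\Phi(K_F(n))$ is well defined and equals $\int_{K_F(n)}\phi(x)\,\dint x$, where $\phi$ denotes the Lebesgue density of the Finsler volume measure $\Phi$.

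Next I would check that $\phi$ is positive and continuous on $\Omega$, and that $\sigma$ has a positive and continuous density $\psi$ with respect to the Euclidean surface (Hausdorff) measure on $\bd K$. Identifying each tangent space $T_x\Omega$ with $\RR^d$ via the ambient coordinates, $\phi(x)$ is the value of the norm $\mu_{(T_x\Omega,\,F(x,\cdot))}$ on $\bigwedge^d T_x\Omega$ evaluated at $e_1\wedge\dots\wedge e_d$; positivity is automatic because $\mu$ is a norm on a one-dimensional space, and continuity follows from the Banach--Mazur continuity axiom in the definition of volume, since $x\mapsto(\RR^d,F(x,\cdot))$ is continuous in the Banach--Mazur topology (a consequence of the joint continuity --- indeed $C^3$-smoothness --- of $F$). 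The surface density is handled in the same way, now with the $(d-1)$-dimensional normed spaces $(T_x\bd K,\,F(x,\cdot)|_{T_x\bd K})$: since $\bd K$ is of class $C^2$ the tangent-hyperplane field varies continuously, so these normed spaces vary continuously and the induced surface density is positive and continuous; normalizing to the probability measure $\sigma$ only rescales by a positive constant and preserves both properties.

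With these facts in place, $\Phi(K_F(n))$ is precisely the $\phi$-weighted volume of the random inscribed polytope in the $C^2_+$ body $K$ whose vertices are sampled from $\bd K$ according to a positive continuous density, so the general central limit theorem for weighted volumes applies and delivers the claim; the same reduction, applied to the Riemannian volume density \eqref{eq:riem-vol-density} and the corresponding surface density, proves Theorem~\ref{cor:Riemann}. I expect the main obstacle to lie not in this reduction but in the general Euclidean theorem it invokes: one must establish the correct asymptotic order of $\Var\Phi(K_F(n))$, so that the normalization is non-degenerate, together with a Berry--Esseen-type estimate. This rests on geometric estimates for weighted surface bodies --- the analogues, for a general positive continuous weight, of B\'ar\'any-type cap-covering and surface-body estimates --- and on a normal-approximation bound for functionals of independent random variables applied after a suitable localization of the volume functional. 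Within the present argument, the only delicate point is deducing continuity of the two densities from the Banach--Mazur continuity axiom; the remainder is bookkeeping.
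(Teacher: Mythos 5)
Your reduction is exactly the paper's own argument: the proof of Theorem~\ref{thm:Finsler} there also just verifies that the Finsler volume and surface measures are given by integrating continuous positive densities (so they differ from Lebesgue and Hausdorff measure by positive continuous factors, the space of densities at each point being one-dimensional) and then invokes the weighted Euclidean CLT, Theorem~\ref{thm:CLT}, whose proof carries all the real work. Your extra remarks --- that projectivity makes $K_F(n)$ the ordinary convex hull and that continuity of the densities follows from the Banach--Mazur continuity axiom --- only make explicit what the paper assumes in its setup, so the proposal is correct and essentially identical in approach.
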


\begin{remark}
    Theorem \ref{cor:Riemann} is almost a special case of Theorem \ref{thm:Finsler}, except it allows for slightly weaker regularity of the metric.
\end{remark}


\begin{example}[Hilbert geometry]
	The best known example of a projective Finsler metric is the Hilbert metric inside an open and convex domain $\Omega \subset \RR^d$. The Hilbert--Finsler norm is defined by
	\begin{equation*}
	H_\Omega(x, v) = \frac{1}{2}\left[\frac{1}{t_+(x,v)}   + \frac{1}{t_-(x,v)}\right],
	\end{equation*}
	where $t_\pm(x,v)$ are defined by (see Figure \ref{fig:Hilbert_Finsler})
	\begin{equation}\label{eq:t_pm}
	t_\pm(x,v) = \sup \{t > 0 \,:\, x \pm t v \in \Omega  \}.
	\end{equation}
	
%
%
%
%

	\begin{figure}[t]
		\centering
		\begin{tikzpicture}
            \node at (0,0) {\includegraphics[width=6.3cm]{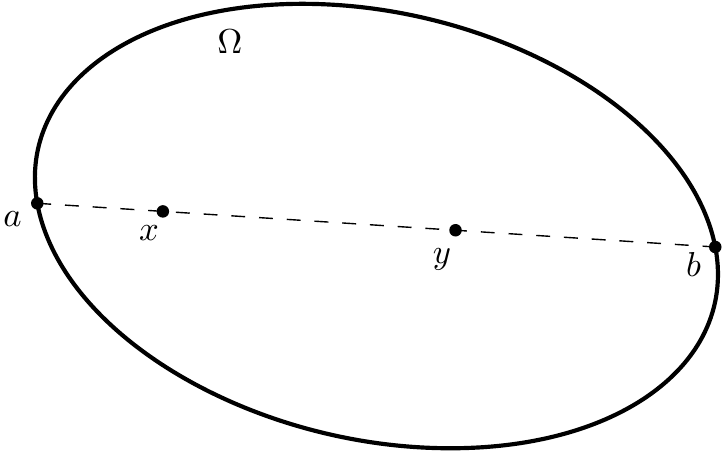}};
            \node at (-2.5,-1.5) {(a)};
            \node at (8,0) {\includegraphics[width=6cm]{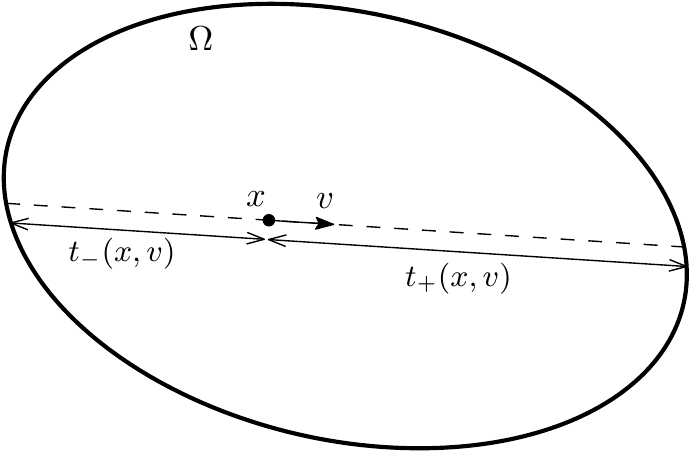}};
            \node at (5.5,-1.5) {(b)};
        \end{tikzpicture}
		\caption{(a) The Hilbert distance in a convex domain $\Omega$. (b) The Finsler norm of a Hilbert (or Funk) geometry in $\Omega$.}\label{fig:Hilbert_Finsler}
	\end{figure}

	\noindent The induced distance function on $\Omega$ is given by 
	\begin{equation*}
	\rho_{\Omega}(x,y) = \frac{1}{2} \log \left( \frac{\|a-x\|}{\|a-y\|} \, \frac{\|b-x\|}{\|b-y\|}\right),
	\end{equation*}
	where $a$ and $b$ are the intersection points of the line passing through $x$ and $y$ with $\bd \Omega$, arranged so that $(a, x, y, b)$ lie in that order on the line (see Figure \ref{fig:Hilbert_Finsler}).
%
 We refer the reader to \cite{PapTroy_Book} for more details on Hilbert geometries.  
 From Theorem \ref{thm:Finsler} we deduce that, for any two fixed definitions of volume on $d$- and $(d-1)$-dimensional normed spaces, the Finsler volume of the convex hull of independent random points on the boundary of a convex body in a Hilbert geometry obeys a central limit theorem. 
 
 Let us remark that in order to apply Theorem \ref{thm:Finsler}, we need to assume that the Hilbert--Finsler norm $H_\Omega$ is $C^3$-smooth and strongly convex (which is the case if $\Omega$ is a bounded convex domain of class $C^3$, whose boundary has everywhere positive Gauss--Kronecker curvature.) However, in case of Hilbert geometries we may in fact relax these regularity assumptions, which were made in order to ensure the uniqueness of geodesics. For Hilbert geometries, it is known that this holds if $\Omega$ is a strictly convex domain (see \cite[Corollary 12.7]{PapTroy_From}). Thus is $\Omega$ if a strictly convex domain of class $C^1$, the asymptotic normality of random inscribed polytopes still holds.
\end{example}

\begin{example}[Funk geometry]
%
%

The discussion of definition of volume applies to Finsler norms which are \emph{reversible}, i.e., satisfy $F(x,-v)=F(x,v)$ for all $x \in \Omega$ and $v \in T_x \Omega$. However, for non-reversible Finsler norms, one may still define the Buseman and Holmes--Thompson volume densities. A famous example of a \emph{projective} non-reversible Finsler norm is the \emph{Funk geometry} in a convex domain $\Omega \subset \RR^d$. This is the Finsler norm
\[
    F_\Omega(x,v) = \frac{1}{t_+(x,v)},
\]
where $t_+(x,v)$ is defined as in \eqref{eq:t_pm}, see also Figure \ref{fig:Hilbert_Finsler}. We refer again to \cite{PapTroy_Book} for more details on Funk geometries. Assume that $\Omega$ is $C^1$-smooth and strictly convex, then the Funk metric is uniquely geodesic (see \cite[Corollary 7.8]{PapTroy_From}). Then, fixing either the Buseman or the Holmes--Thompson volume definitions for the $d$- and $(d-1)$-dimensional volume measurements, we have that the Finsler volume of the convex hull of independent random points on the boundary of a convex body in a Funk geometry obeys a central limit theorem. 
\end{example}

\subsection{Dual Brunn--Minkowski theory}\label{sec:dual-BM}

The dual Brunn--Minkowski theory, introduced by Lutwak \cite{Lut75,Lut79}, is a variant of classical Brunn--Minkowski theory, which has become a central piece of modern convex geometry, see e.g.\ \cite{AHH:2018, BHK:2019, BHP:2018, Gardner:2007, GJV:2003, HLYZ:2016, LYZ:2018}. Its starting point is the replacement of Minkowski sum by the so-called \emph{radial sum} of convex bodies, or more generally, star bodies. Dual mixed volumes and related concepts are then derived analogously to classical mixed volumes. While not dual to the classical theory in a precise sense, many results and constructions of the dual theory mirror those of the classical one (see e.g.\ \cite{Sch} for details about dual Brunn-Minkowski theory as well as the references cited therein). Here we focus on the \emph{dual volumes}, which may be derived from dual mixed volumes, or defined directly by dualizing the Kubota formula. Namely, the $j$-th dual volume of a star body $A\subset\RR^d$ is, up to a constant, the average volume of the intersection of $A$ with a $j$-dimensional linear subspace, chosen according to the Haar probability measure on the Grassmannian of $j$-dimensional linear subspaces of $\RR^d$.  In \cite{Lut75b} Lutwak proved the following formula for the $j$-th dual volume of a convex body $A \subset \RR^d$ containing the origin in terms of its \emph{radial function} $\rho_A:S^{d-1}\to (0,+\infty)$, defined by $\rho_A(u) = \max\{r>0: ru\in A\}$:
\begin{equation}\label{eq:dual_volumes}
    \tV_j (A) = \kappa_d \int_{S^{d-1}} \rho_A(u)^j du,
\end{equation}
where we recall that $\kappa_d$ is the volume of the $d$-dimensional Euclidean unit ball.
Moreover, $du$ denotes the infinitesimal element of the normalized surface measure on the unit sphere $S^{d-1}$. Using \eqref{eq:dual_volumes}, Lutwak extended the definition of the dual  volumes ${\tV}_j$ to any $j \in \RR$, and it is this extension which we investigate here. Our next result is a central limit theorem for dual volumes of random inscribed polytopes. However, we note that with positive probability, the random inscribed polytope does not contain the origin. To remedy this, we consider the convex hull of the random polytope with a fixed convex set $T$ containing the origin, which is srictly contained in $K$, where the specific choice of $T$ is irrelevant for our result. Let us emphasize that, for large $n$, the random inscribed polytope contains $T$ with overwhelming probability, in which case this convex hull is simply the polytope itself.

\begin{theorem}\label{thm:dual-volume}
	Let $K \subset \RR^d$ be a convex body of class $C^2_+$ and let $T\subset K$ be another convex body, which is strictly contained in $K$ and contains the origin. Let $\sigma$ be a probablitiy measure on $\bd K$ with positive continuous density. Denote by $K_{\sigma,T}(n) $ the convex hull of the random inscribed polytope $K_\sigma(n)$ and $T$. Then, for any real $j \neq 0$, the dual volume $\tV_j(K_{\sigma,T}(n))$ satisfies a central limit theorem, that is
	\begin{equation}\label{eq:tV_j}
	\frac{ \tV_j(K_{\sigma,T}(n)) - \EE \tV_j(K_{\sigma,T}(n))}{ \sqrt{\Var \tV_j(K_{\sigma,T}(n)) } } \overset{d}{\longrightarrow} Z \qquad \text{as $n\to \infty$},
	\end{equation}
	where $Z$ is a standard Gaussian random variable.
\end{theorem}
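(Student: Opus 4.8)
The plan is to deduce this from the general central limit theorem for weighted volumes of random inscribed polytopes that underlies Theorems \ref{cor:Riemann} and \ref{thm:Finsler} (the ``weighted random inscribed polytope'' CLT announced in the abstract and introduction, which we may assume as an already-established ingredient). The key is to express the dual volume $\tV_j$ of a convex body containing the origin as a \emph{weighted volume} in the sense required by that general result, so that $\tV_j(K_{\sigma,T}(n))$ becomes, up to the event that $K_\sigma(n)\supset T$, exactly a functional covered by the general theorem. Concretely, passing to polar coordinates in \eqref{eq:dual_volumes} gives
\begin{equation*}
    \tV_j(A) = \kappa_d \int_{S^{d-1}} \rho_A(u)^j \,du
    = j\kappa_d\,\omega_d^{-1} \int_{S^{d-1}} \!\! \int_0^{\rho_A(u)} r^{j-1}\,dr\,du
    = j\,\frac{\kappa_d}{\omega_d} \int_{A} \|x\|^{j-d}\,dx,
\end{equation*}
valid for $j>0$ (and, by Lutwak's extension, the same identity with the integral interpreted appropriately for $j<0$, where one must be careful near the origin — this is why the auxiliary body $T$ is needed). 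Thus $\tV_j(A)$ is the integral over $A$ of the continuous positive weight $w(x) = j\kappa_d\omega_d^{-1}\|x\|^{j-d}$, which is bounded and bounded away from zero on the compact region between $\bd T$ and $\bd K$. Since $T\subset K_{\sigma,T}(n)$ with overwhelming probability, on that event $\tV_j(K_{\sigma,T}(n))$ equals $\tV_j(T)$ plus the $w$-weighted volume of the ``annular'' region $K_\sigma(n)\setminus T$, which is precisely the kind of weighted functional of the random inscribed polytope $K_\sigma(n)$ to which the general CLT applies.

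The steps, in order, are as follows. First, establish the polar-coordinate identity above and verify that the weight $w$ is positive and continuous on a neighbourhood of $K\setminus\interior T$, and that for $j<0$ Lutwak's extended definition of $\tV_j$ coincides with $\tV_j(T) + \int_{A\setminus T} w\,dx$ whenever $T\subset A$. Second, fix $T$ and write $\tV_j(K_{\sigma,T}(n)) = \tV_j(T) + W_n$ where $W_n$ is the $w$-weighted volume of $K_\sigma(n)$ measured \emph{relative to} $T$ — equivalently, $W_n = W(K_\sigma(n)) - \tV_j(T)$ on the event $\{T\subseteq K_\sigma(n)\}$, with $W$ the $w$-weighted volume functional. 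Third, invoke the general weighted-volume central limit theorem for $K_\sigma(n)$ with weight $w$: this gives asymptotic normality of $W(K_\sigma(n))$ together with the requisite variance lower bound $\Var W(K_\sigma(n)) \gtrsim n^{-(d+3)/(d+1)}$ (or the analogous rate), after checking that $w$ and the density of $\sigma$ satisfy the hypotheses (smoothness of $\bd K$ of class $C^2_+$, positivity and continuity of both $w$ and the density). Fourth, transfer the CLT from $W(K_\sigma(n))$ to $\tV_j(K_{\sigma,T}(n))$: since the two random variables agree on an event of probability $1 - e^{-cn^{1-\varepsilon}}$ (or at least $1-o(n^{-a})$ for every $a$) by standard estimates on the probability that $n$ i.i.d.\ boundary points fail to ``surround'' the fixed body $T\subset\interior K$, and since both are bounded, the difference of the standardized variables tends to $0$ in probability; an additive constant and positive multiplicative constant do not affect the standardization, so Slutsky's lemma finishes the argument.

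The main obstacle, and the part deserving the most care, is the case $j<0$, where the weight $\|x\|^{j-d}$ blows up at the origin and the naive integral $\int_A \|x\|^{j-d}\,dx$ diverges. This is exactly the role of the auxiliary body $T$: one must check that Lutwak's extension of $\tV_j$ to negative indices (defined through \eqref{eq:dual_volumes}, which remains finite since $\rho_A$ is bounded below) is compatible with the decomposition $\tV_j(T) + \int_{A\setminus T} w$, and that $\tV_j(K_{\sigma,T}(n))$ — which is a genuine dual volume of the star body $K_{\sigma,T}(n)\ni 0$ — really does coincide with this decomposition on the good event. Once the bookkeeping with $T$ is in place, the remaining ingredients (the polar identity, the probability estimate that $T$ is eventually absorbed, and the application of the general weighted CLT) are routine. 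A secondary point to verify is that the variance of $\tV_j(K_{\sigma,T}(n))$ has the same order as the variance of the weighted volume $W(K_\sigma(n))$, so that dividing by $\sqrt{\Var \tV_j(K_{\sigma,T}(n))}$ is legitimate; this again follows from the coincidence of the two variables on an overwhelmingly likely event together with the variance lower bound supplied by the general theorem.
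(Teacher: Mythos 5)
Your proposal is correct and follows essentially the same route as the paper: integrate \eqref{eq:dual_volumes} in polar coordinates to write $\tV_j$ as a weighted volume with density proportional to $\|x\|^{j-d}$, which is positive and continuous near $\bd K$, and then apply the general weighted CLT of Theorem \ref{thm:CLT} in its minor adaptation \eqref{eq:modified_CLT} to $K_{\sigma,T}(n)$, the auxiliary body $T$ being absorbed with overwhelming probability via Lemma \ref{lem-RVW-surface-body}. The only difference is bookkeeping for $j<0$: the paper writes $\tV_j(K_{\sigma,T}(n))=\Phi_j(\RR^d)-\Phi_j(K_{\sigma,T}(n))$ for a \emph{finite} measure $\Phi_j$ whose density is modified to be bounded, positive and continuous inside $T$, whereas you subtract $\tV_j(T)$ and integrate over the annulus $K_{\sigma,T}(n)\setminus T$ --- equivalent in substance, but note that for $j<0$ the weight $\tfrac{j}{d}\|x\|^{j-d}$ is negative and non-integrable at the origin, so the general CLT cannot be invoked literally for ``$W(K_\sigma(n))$ with weight $w$'' as your second and third steps phrase it; it must be applied (as in the paper) to such a modified finite measure evaluated on $K_{\sigma,T}(n)$, after which the sign and the additive constant are harmless.
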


\subsection{Random polyhedral sets}
In this section we consider a dual model of a random circumscribing polyhedral set. 
Let $K \subset \RR^d$ be a convex body of class $C^2_+$. Fix a probability measure $\sigma$ on $\bd K$ with a positive and continuous density $\varsigma$ with respect to the $(d-1)$-dimensional Hausdorff measure on $\bd K$. For a point $x \in \bd K$ denote by $H(x)$ the unique supporting affine hyperplane to $K$ at $x$, and by $H^-(x)$ the closed half-space determined by $H(x)$ containing $K$. We define a (weighted) random polyhedral set as follows: Let $X_1, X_2,\ldots$ be a sequence of independent random points on $\bd K$ distributed according to $\sigma$, and define
$$
P_\sigma(n) = \bigcap_{i=1}^n H^-(X_i)
$$
for $n\geq d+1$. We denote by $W(L)$ the mean width of a convex body $L\subset\RR^d$, that is,
$$
W(L) = \int_{S^{d-1}} w(L,u) \,du,
$$
where, as above, the integration is with respect to the normalized spherical Lebesgue measure and $w(L,u)$ is the width of $L$ in direction $u$, i.e., $w(L,u)=h_L(u)+h_L(-u)$ with $h_L(y)=\max\{\langle x,y\rangle:x\in L\}$, $y\in\RR^d$ being the support function of $L$. We show a central limit theorem for the mean width of the random polyhedral set $P_\sigma(n)$. However, as this set is unbounded with positive probability, we will consider its intersection with a fixed convex window $L$ which strictly contains $K$. A common choice for $L$ in the literature is the parallel body $K_1:=\{x \in \RR^d \,:\, {\rm dist} (x, K) \leq 1 \}$, but the result does not depend on the choice of $L$.

\begin{theorem}\label{thm:width}
    Under the above assumptions, the mean width of $P_\sigma(n) \cap L$ satisfies a central limit theorem, that is,
	\begin{equation*}
	\frac{W(P_\sigma(n) \cap L) - \EE W(P_\sigma(n) \cap L)}{\sqrt{\Var W(P_\sigma(n) \cap L)} } \overset{d}{\longrightarrow} Z \qquad \text{as $n\to\infty$},
	\end{equation*}
	where $Z$ is a standard Gaussian random variable.
\end{theorem}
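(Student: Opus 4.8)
The plan is to dualise the model by polarity and thereby reduce the statement to the central limit theorem for dual volumes, Theorem~\ref{thm:dual-volume}, applied with exponent $j=-1$ to the polar body $K^\circ$.

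\emph{Reduction and polar set-up.} Since the mean width is translation invariant and a simultaneous translation of $K$, of $L$ and of the measure $\sigma$ moves $P_\sigma(n)\cap L$ by the same vector, I would first assume without loss of generality that $0\in\interior K$; then $0\in\interior L$ as well, so the polar bodies $K^\circ,L^\circ$ are convex bodies with the origin in their interiors. As $K$ is of class $C^2_+$ and contains the origin in its interior, $K^\circ$ is again of class $C^2_+$, and since $K$ is strictly contained in $L$, the body $L^\circ$ is strictly contained in $K^\circ$. For $x\in\bd K$ write $u_K(x)\in S^{d-1}$ for the outer unit normal of $K$ at $x$ and set $x^\ast:=u_K(x)/h_K(u_K(x))\in\bd K^\circ$; then $H^-(x)=\{y:\langle y,x^\ast\rangle\le1\}$, whose polar is the segment $\conv\{0,x^\ast\}$. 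Since polarity turns intersections into closed convex hulls of the polars, and since $0\in L^\circ$, this yields
\[
(P_\sigma(n)\cap L)^\circ=\conv\bigl(\{X_1^\ast,\dots,X_n^\ast\}\cup L^\circ\bigr),
\]
which is precisely the convex hull of $n$ random points $X_i^\ast$ on $\bd K^\circ$ together with the fixed body $L^\circ\ni0$ strictly contained in $K^\circ$. The map $x\mapsto x^\ast$ is the composition of the Gauss map $\bd K\to S^{d-1}$ with the radial parametrisation $v\mapsto v/h_K(v)$ of $\bd K^\circ$; as $K$ and $K^\circ$ are of class $C^2_+$, both are $C^1$-diffeomorphisms, so $x\mapsto x^\ast$ is a $C^1$-diffeomorphism $\bd K\to\bd K^\circ$, and by the change-of-variables formula the law of $X_i^\ast$ has a positive and continuous density on $\bd K^\circ$ with respect to $\cH^{d-1}$ (the density $\varsigma$ of $\sigma$ being positive and continuous, and the relevant Jacobian being positive and continuous).

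\emph{Mean width as a dual volume.} For any convex body $M$ with $0\in\interior M$ one has $\rho_{M^\circ}(u)=1/h_M(u)$ for $u\in S^{d-1}$, so, using the symmetry of the normalised spherical measure,
\[
W(M)=\int_{S^{d-1}}\bigl(h_M(u)+h_M(-u)\bigr)\,du=2\int_{S^{d-1}}h_M(u)\,du=\frac{2}{\kappa_d}\,\tV_{-1}(M^\circ).
\]
Taking $M=P_\sigma(n)\cap L$, which contains $K$ and hence the origin in its interior, gives $W(P_\sigma(n)\cap L)=\tfrac{2}{\kappa_d}\,\tV_{-1}\bigl((P_\sigma(n)\cap L)^\circ\bigr)$. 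By the previous paragraph the right-hand side is, up to the positive constant $2/\kappa_d$, the $(-1)$-st dual volume of the convex hull of $n$ i.i.d.\ points on $\bd K^\circ$ distributed according to a positive continuous density, taken together with the fixed body $L^\circ$ that contains the origin and is strictly contained in $K^\circ$. Theorem~\ref{thm:dual-volume}, applied with $K^\circ$, $L^\circ$, the push-forward measure, and $j=-1$ in place of $K$, $T$, $\sigma$, $j$, then yields a central limit theorem for this quantity, and since a central limit theorem is preserved under scaling by a positive constant, the asserted central limit theorem for $W(P_\sigma(n)\cap L)$ follows.

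\emph{Where the work is.} The argument is essentially a dictionary, so the points requiring care are that $C^2_+$-regularity and the strict inclusions are preserved under polarity and that the push-forward of $\sigma$ under the polar Gauss map genuinely has a positive continuous density on $\bd K^\circ$; these are standard facts of convex geometry, and I do not expect a substantial obstacle beyond this bookkeeping, since the analytic heart of the matter is already contained in Theorem~\ref{thm:dual-volume}.
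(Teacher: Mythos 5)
Your proposal is correct and follows essentially the same route as the paper: the identity $W(M)=\tfrac{2}{\kappa_d}\tV_{-1}(M^\circ)$, polarity turning $P_\sigma(n)\cap L$ into the convex hull of the points $X_i^\ast=\Lambda(X_i)\in\bd K^\circ$ together with $L^\circ$, the observation that the push-forward of $\sigma$ under this (Legendre/polar Gauss) map has a positive continuous density, and then Theorem~\ref{thm:dual-volume} with $j=-1$. The paper's proof is the same argument, merely citing Lutwak for the mean-width identity and B\"or\"oczky--Reitzner for the explicit push-forward density.
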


In this context we would like to mention that the expected mean width of random polyhedral sets $P_\sigma(n) \cap L$ has been analysed in detail in \cite{BFHWidth,BorReit} under different smoothness assumptions on the body $K$. Theorem \ref{thm:width} adds a central limit theorem to this line of research.

\subsection{Weighted random inscribed polytopes}\label{subsect:weighted}

Let us finally turn to the main result of this paper, which we use to derive Theorems \ref{cor:Riemann}, \ref{thm:Finsler} and \ref{thm:dual-volume} as special cases as we shall explain in Section \ref{sec:proof_others}. It is the counterpart for inscribed random polytopes of \cite[Theorem 2.1]{ThaBes}, which holds for random convex hulls with points chosen inside a convex body. At the same time it generalizes the main result in \cite{Tha} for the volume of random convex hulls of uniformly distributed random points on the boundary of a convex body of class $C_+^2$ to weighted volumes and to random points chosen according to a density.

To formally describe the set-up, fix a space dimension $d\geq 2$ and let $K \subset \RR^d$ be a convex body whose boundary $\bd K$ is a $C^2$-smooth submanifold of $\RR^d$ with everywhere positive Gauss--Kronecker curvature. We fix a probability measure $\sigma$ on $\bd K$ with a continuous and positive density $\varsigma > 0$ with respect to the $(d-1)$-dimensional Hausdorff measure on $\bd K$. Additionally,  we let $\Phi$ be a measure on $K$ with a positive density $\phi > 0$ with respect to the Lebesgue measure on $K$, such that $\phi$ is continuous on a (relative) neighbourhood of $ \bd K$ in $K$.


This puts us into the position to define what we mean by a \textit{weighted random inscribed polytope} in $K$. We choose a sequence $X_1,X_2,\ldots$ of independent random points on $\bd K$ according to the probability measure $\sigma$, and for $n\geq d+1$ set $K_\sigma(n) := [X_1, \ldots, X_n]$ to be the convex hull of $X_1,\ldots,X_n$. We will prove that the $\Phi$-measure of $K_\sigma(n)$ satisfies a central limit theorem, as $n\to\infty$.

\begin{theorem}\label{thm:CLT}
	Under the above assumptions one has
	\[
	\frac{\Phi(K_\sigma(n)) - \EE \Phi(K_\sigma(n))}{ \sqrt{\Var \Phi(K_\sigma(n)) }} \overset{d}{\longrightarrow} Z\qquad \text{as $n\to \infty$},
	\]
	where $Z$ is a standard Gaussian random variable.
\end{theorem}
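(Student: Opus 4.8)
The plan is to apply a general normal approximation result for functionals of independent random variables---specifically, a Berry--Esseen bound of the type developed for stabilizing functionals or for functionals with bounded difference operators (such as the ones based on the Chatterjee--Stein method or on second-order Poincaré-type inequalities). The functional in question is $F_n = \Phi(K_\sigma(n))$, viewed as a function of the i.i.d.\ inputs $X_1,\dots,X_n \in \bd K$. The first step is to record the relevant abstract bound: for a functional $F_n = f(X_1,\dots,X_n)$ with $X_i$ i.i.d., the Kolmogorov distance between the standardized $F_n$ and $Z$ is controlled by moments of the first-order difference operator $D_i F_n := F_n - \EE_i F_n$ (re-randomizing the $i$-th coordinate) and the second-order operator $D_{ij}F_n$. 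Concretely one aims for a bound of the form
\[
\Kol\!\left(\frac{F_n - \EE F_n}{\sqrt{\Var F_n}},\, Z\right) \;\lesssim\; \frac{(\text{moments of }D_iF_n,\,D_{ij}F_n)}{(\Var F_n)^{3/2}}\,,
\]
and then to verify that the right-hand side tends to $0$.

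The geometric heart of the argument is to estimate these difference operators. Adding or removing a single point $X_i$ changes $K_\sigma(n)$ only near the part of $\bd K$ that is "visible" from $X_i$, i.e.\ in a cap-like region; the resulting change in $\Phi$-measure is governed by the size of this region. Here the key input is the geometry of \emph{weighted surface bodies}---the weighted analogues of the floating/illumination bodies---whose behaviour near $\bd K$ is controlled by the $C^2_+$ assumption on $K$ and the positivity and continuity of the densities $\varsigma$ and $\phi$. Using these estimates one shows: (i) $D_iF_n$ is small, with $p$-th moments decaying like a negative power of $n$; (ii) the second-order terms $D_{ij}F_n$ are non-zero only on a rare event (the visibility regions of $X_i$ and $X_j$ must overlap), contributing an extra factor of $n^{-1}$ in probability; and (iii) crucially, the variance is non-degenerate, $\Var F_n \geq c\, n^{-(d+3)/(d+1)}$ (the expected order for boundary models), so that the denominator does not kill the bound. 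Matching the numerator estimates against this variance lower bound then yields a quantitative rate in the CLT, and a fortiori convergence in distribution.

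The main obstacle I expect is the \emph{variance lower bound}: proving $\Var \Phi(K_\sigma(n))$ is of the right order (not smaller). Upper bounds on moments of differences are comparatively routine once the surface-body estimates are in place, but a clean lower bound on the variance typically requires a more delicate argument---e.g.\ an Efron--Stein-type identity combined with a careful localized construction showing that, with non-vanishing probability, resampling a single point produces a change of the expected magnitude, uniformly over a positive-measure family of configurations. One convenient route is to compare with the unweighted uniform case treated in \cite{Tha}: since $\varsigma$ and $\phi$ are bounded above and below by positive constants on a neighbourhood of $\bd K$, the weighted functional is sandwiched between constant multiples of the unweighted one, allowing the variance asymptotics (or at least the correct order) to be transferred. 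A second, more technical point is handling the boundary between the region where $\phi$ is controlled (a neighbourhood of $\bd K$) and the interior: since $K_\sigma(n)$ fills out $K$ except for a thin shell near $\bd K$, the "bulk" contribution $\Phi(K) - \Phi(K_\sigma(n))$ is supported in that shell for large $n$, so the continuity of $\phi$ only near $\bd K$ suffices---but this localization must be made quantitative and combined with the difference-operator estimates.
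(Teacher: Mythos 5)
Your overall strategy coincides with the paper's: a Berry--Esseen bound for functionals of i.i.d.\ points controlled by first- and second-order difference operators (Lemma \ref{lem:CLTLachiezeReyPeccati}), difference operators estimated via visibility regions relative to the weighted surface body together with the high-probability inclusion $K_\sigma^\tau \subset K_\sigma(n)$ (Lemma \ref{lem-RVW-surface-body}), and a variance lower bound matched against these moment estimates. Two points in the variance part, which you correctly single out as the crux, need attention. First, the order you quote, $\Var \Phi(K_\sigma(n)) \gtrsim n^{-(d+3)/(d+1)}$, is the one for points sampled \emph{inside} the body; for the boundary model the correct order is $n^{-(d+3)/(d-1)}$ (Theorem \ref{thm:var-bound}), and it is this exponent that must be matched against difference moments of order $\bigl((\log n)/n\bigr)^{(d+1)/(d-1)}$ for the final Kolmogorov bound of order $n^{-1/2}$ (up to logarithms) to come out.

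Second, and more seriously, your ``convenient route'' of transferring the variance order from the unweighted case of \cite{Tha} by sandwiching does not work. Pointwise comparability $c_1 \Vol_d \leq \Phi \leq c_2 \Vol_d$ near $\bd K$ gives no comparison of variances: from $c_1 g \leq f \leq c_2 g$ one cannot conclude $\Var f \geq c \Var g$, since the fluctuations of $f$ could a priori be much smaller than those of $g$. Moreover, the density $\varsigma$ changes the law of $K_\sigma(n)$ itself, so even the unweighted volume evaluated on the weighted model is not covered by \cite{Tha}. The fallback you mention is the correct one, and is what the paper does: redo the localized construction of \cite{RVW} (following \cite{Reitzner:2005}) with weights --- an economic cap covering by $\sim n$ disjoint caps of height $h_n \asymp n^{-2/(d-1)}$, events $A_j$ on which exactly one point lies in each of $d+1$ prescribed boundary pieces inside the $j$-th cap and no other point lies in that cap, a local estimate $\Var_Y \Phi([Y,y_1,\dotsc,y_d]) \asymp h_n^{d+1}$ (Lemma \ref{lem:var-bound2}) exploiting the continuity and positivity of $\phi$ and $\varsigma$, and the conditional variance decomposition, where a geometric flatness condition on the construction guarantees the independence needed to sum the local contributions. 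Only with this argument does the lower bound $c\,n^{-(d+3)/(d-1)}$, and hence the central limit theorem, follow.
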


\begin{remark}
In our proof we establish the following quantitative version of Theorem \ref{thm:CLT}:
$$
\sup_{t\in\RR}\Big|\PP\Big(\frac{\Phi(K_\sigma(n)) - \EE \Phi(K_\sigma(n))}{ \sqrt{\Var \Phi(K_\sigma(n)) }}\leq t\Big)-\PP(Z\leq t)\Big| \leq C\, n^{-\frac{1}{2}}\,  (\log n)^{2\frac{d+1}{d-1}+1 }
$$
for some constant $C=C(K,\varsigma,\phi)>0$ only depending on $K$, $\varsigma$ and $\phi$. Clearly, taking $n\to\infty$ this yields the distributional convergence stated in Theorem \ref{thm:CLT}. In the same spirit it is possible to upgrade Theorem \ref{cor:Riemann}, Theorem \ref{thm:Finsler}, Theorem \ref{thm:dual-volume} and Theorem \ref{thm:width} as well.
\end{remark}

\section{Preliminaries}

In this paper we denote absolute constants by $c,C,\ldots$ and whenever a constant depends on additional parameters $a,b,\ldots$, say, we indicate this by writing $c=(a,b,\ldots),C=C(a,b,\ldots)$ etc. Our convention is that constants may depend on the convex body $K$ and the measures $\Phi$ and $\sigma$, but never on the number of points $n$.

\subsection{Geometric tools}

Throughout this section we keep the assumptions from Section \ref{subsect:weighted}, namely, $\Omega \subset \RR^d$ is a convex domain, and $K \subset \Omega$ is a convex body of class $C^2_+$, $\Phi$ and $\sigma$ are measures on $K$ and $\bd K$, respectively, with  positive densities $\phi$ and $\varsigma$, with respect to the Lebesgue measure and $(d-1)$-dimensional Hausdorff measure, respectively, such that $\varsigma$ is continuous and $\phi$ is continuous in a neighbourhood of $\bd K$.

For a hyperplane $H \subset \RR^d$ we use the notation $H^\pm$ for the two closed half-spaces bounded by $H$.
For a parameter $t \in (0,1)$, the \textit{$\sigma$-surface body} (or \textit{weighted surface body}) of $K$ with parameter $t$ is defined by
\begin{equation*}
K_\sigma^t = \bigcap \{H^+ \,:\, H \subset \RR^d \text{ a hyperplane}, \sigma(\bd K \cap H^-) \leq t  \}.
\end{equation*}
Note that we get back the classical surface body $K^t$ from \cite{SchWer04} if we choose for $\sigma$
the normalized $(d-1)$-dimensional Hausdorff measure on $\bd K$.

%
%

For a point $z \in \bd K$ and $t > 0$, define the \emph{visibility region} of $z$ (with respect to the measure $\sigma$) as all points in $K \setminus K_\sigma^t$ visible from $z$ around the ‘obstacle’ $K_\sigma^t$, that is 
\begin{equation*}
\Vis_\sigma(z, t) = \{ y \in K \setminus K_\sigma^t \,:\, [z,y] \cap K_\sigma^t = \emptyset  \}.
\end{equation*}
\begin{figure}[h]
	\centering
	\begin{tikzpicture}[scale=1.2]
	\clip (-3.5,-0.5) rectangle (3.5,2.5);
	\path[use as bounding box] (-3.5,-0.5) rectangle (3.3,2.5);
	\begin{scope}[rotate=-20, xscale=3, yscale=2, rotate=20] 
	
	\def\r{0.89}
	
	\draw (0,0) circle (1);
	\draw[dashed] (0,0) circle (\r);
	
	\coordinate (O) at (0,1);
	
	\coordinate (A1) at ({\r*sqrt(1-\r*\r)}, {\r*\r});
	\coordinate (B1) at ({2*\r*sqrt(1-\r*\r)},{2*\r*\r-1});
	
	\coordinate (A2) at ({-\r*sqrt(1-\r*\r)}, {\r*\r});
	\coordinate (B2) at ({-2*\r*sqrt(1-\r*\r)},{2*\r*\r-1});
	
	\fill[color=black, opacity=0.2] (A2)
	arc({180-atan(\r/sqrt(1-\r*\r))}:{atan(\r/sqrt(1-\r*\r))}:\r) -- (B1)
	arc({atan((2*\r*\r-1)/(2*\r*sqrt(1-\r*\r))}:{180-atan((2*\r*\r-1)/(2*\r*sqrt(1-\r*\r))}:1) -- cycle;
	\draw[thick] (A1) -- (B1) arc({atan((2*\r*\r-1)/(2*\r*sqrt(1-\r*\r))}:{180-atan((2*\r*\r-1)/(2*\r*sqrt(1-\r*\r))}:1) -- (A2);
	\draw[thick, white]  (A2) arc({180-atan(\r/sqrt(1-\r*\r))}:{atan(\r/sqrt(1-\r*\r))}:\r);
	\draw[thick] (A2) arc({180-atan(\r/sqrt(1-\r*\r))}:{atan(\r/sqrt(1-\r*\r))}:\r);
	\draw[dotted] (A1) -- (O) -- (A2);
	
	\begin{scope}[yshift=1cm]
	\fill[black] (0,0) [rotate=-20, xscale = 1/3, yscale = 1/2, rotate=20] circle(0.05) node[above right] {$z$};
	\end{scope}

	
	
	\end{scope}
	
	\node at (-3.2,0.) {$K$};
	\node at (-2,0.) {$K_\sigma^t$};
	\node at (2,1.8) {$\Vis_\sigma(z,t)$};
	\end{tikzpicture}
	
	\caption{The visibility region $\Vis_\sigma(z,t)$ of a point $z\in\bd K$.}
	\label{fig:vis_region}
\end{figure}
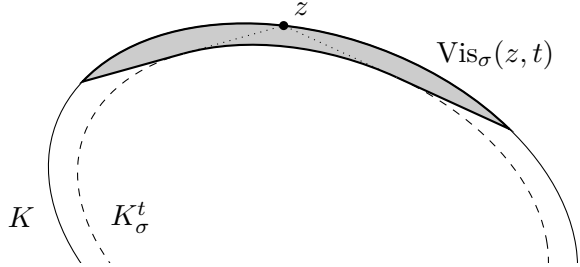
As above, when $\sigma$ is the $(d-1)$-dimensional Hausdorff measure we denote the visibility region simply by $\Vis(z, t)$. We will require the following estimates on visibility regions:
\begin{lemma}
	Let $K \subset \RR^d$ be a convex body of class $C^2_+$. Then there is a constant $C=C(K, \sigma, \Phi)$ such that for all sufficiently small $t>0$ one has
	\begin{equation}\label{eq:bound-Phi-Vis}
	\sup_{z \in \bd K} \Phi(\Vis_\sigma(z, t)) \leq C t^{\frac{d+1}{d-1}}
	\end{equation}
	and
	\begin{equation}\label{eq:bound-Vis-disjoint}
	\sup_{z\in \bd K} \sigma (\left\{ y \in \bd K \,:\, \Vis_\sigma(z, t) \cap \Vis_\sigma(y,t) \neq \emptyset  \right\}) \leq C t.
	\end{equation}
\end{lemma}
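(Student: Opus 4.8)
The plan is to reduce both bounds to the standard estimates for small caps of a $C^2_+$ convex body, as developed for surface bodies in \cite{SchWer04} and exploited in \cite{Tha}, with the densities $\varsigma$ and $\phi$ entering only through uniform upper and lower bounds.

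First I would record the cap estimates in the form needed. Since $\bd K$ is a compact $C^2$-hypersurface with everywhere positive Gauss--Kronecker curvature, its principal curvatures are bounded between two positive constants, uniformly on $\bd K$; hence there is a constant $c_1=c_1(K)>0$ so that every cap $C=K\cap H^-$ of sufficiently small height $h$ satisfies
\[
\mathcal{H}^{d-1}(\bd K\cap C)\ \ge\ c_1\,h^{(d-1)/2},\qquad \diam C\ \le\ c_1^{-1}\,h^{1/2},\qquad \Vol(C)\ \le\ c_1^{-1}\,h^{(d+1)/2},
\]
and, moreover, every point of $C$ lies within Euclidean distance $h$ of $\bd K$. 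As $\varsigma$ is continuous and strictly positive on the compact set $\bd K$, we have $0<\varsigma_{\min}\le\varsigma\le\varsigma_{\max}<\infty$, so a cap $C$ with $\sigma(\bd K\cap C)\le t$ obeys $\mathcal{H}^{d-1}(\bd K\cap C)\le t/\varsigma_{\min}$ and therefore has height at most $\delta=\delta(t):=c_2\,t^{2/(d-1)}$ for a suitable $c_2=c_2(K,\sigma)$. Since $x\notin K_\sigma^t$ means exactly that $x$ lies in some cap of $\sigma$-measure at most $t$, this yields, for all small $t$,
\[
K\setminus K_\sigma^t\ \subseteq\ S_\delta\ :=\ \{x\in K:\operatorname{dist}(x,\bd K)\le\delta(t)\}.
\]

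The heart of the argument is the inclusion $\Vis_\sigma(z,t)\subseteq S_\delta\cap B(z,r)$, with $r=r(t):=c_3\,t^{1/(d-1)}$, valid uniformly in $z\in\bd K$. Indeed, let $y\in\Vis_\sigma(z,t)$. The segment $[z,y]$ lies in $K$ by convexity and is disjoint from $K_\sigma^t$ by the definition of the visibility region, so by the previous paragraph $[z,y]\subseteq S_\delta$; equivalently, $[z,y]$ is disjoint from the interior of the inner parallel body $K_{-\delta}=\{x:\operatorname{dist}(x,\RR^d\setminus K)\ge\delta\}$, which for small $\delta$ is a non-empty convex body satisfying $\operatorname{dist}(p,K_{-\delta})\le\delta$ for every $p\in K$. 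Separating the compact segment $[z,y]$ from the open convex set $\operatorname{int}K_{-\delta}$ by a hyperplane $G$, chosen with $[z,y]\subseteq G^-$ and $K_{-\delta}\subseteq G^+$, we find that every $p\in K\cap G^-$ satisfies $\operatorname{dist}(p,G)\le\operatorname{dist}(p,K_{-\delta})\le\delta$; hence $K\cap G^-$ is a cap of $K$ of height at most $\delta$, and since $z,y\in K\cap G^-$ the cap estimate gives $\|z-y\|\le\diam(K\cap G^-)\le c_1^{-1}\sqrt\delta=c_3\,t^{1/(d-1)}$.

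To finish, note that for small $t$ the boundary patch $\bd K\cap B(z,2r)$ is a single graph over the tangent hyperplane of $\bd K$ at $z$ with uniformly bounded gradient, so $\mathcal{H}^{d-1}(\bd K\cap B(z,2r))\le c_4\,r^{d-1}$, and since $\delta(t)\le r(t)$ the set $S_\delta\cap B(z,r)$ lies in the $\delta$-tube around that patch, hence has Lebesgue measure at most $c_5\,\delta(t)\,r(t)^{d-1}$. As $\delta(t)\to0$ and $\phi$ is continuous, it is bounded by some $\phi_{\max}$ on a fixed neighbourhood of $\bd K$, so the key inclusion gives
\[
\Phi\big(\Vis_\sigma(z,t)\big)\ \le\ \phi_{\max}\,\Vol\big(S_\delta\cap B(z,r)\big)\ \le\ \phi_{\max}\,c_5\,\delta(t)\,r(t)^{d-1}\ =\ C\,t^{\frac{2}{d-1}}\!\cdot t\ =\ C\,t^{\frac{d+1}{d-1}},
\]
uniformly in $z$, which is \eqref{eq:bound-Phi-Vis}. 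For \eqref{eq:bound-Vis-disjoint}, if $\Vis_\sigma(z,t)\cap\Vis_\sigma(y,t)\neq\emptyset$ for some $y\in\bd K$, pick $w$ in the intersection and apply the key inclusion once at $z$ and once at $y$ to obtain $\|z-y\|\le\|z-w\|+\|w-y\|\le 2r(t)$; hence the set in question is contained in $\bd K\cap B(z,2r)$, whose $\sigma$-measure is at most $\varsigma_{\max}\,\mathcal{H}^{d-1}(\bd K\cap B(z,2r))\le\varsigma_{\max}c_4\,(2r(t))^{d-1}=C'\,t$, again uniformly in $z$.

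The main obstacle I anticipate is the uniformity bookkeeping: every constant must be independent of the base point $z\in\bd K$, which is precisely where the $C^2_+$ hypothesis together with compactness of $\bd K$ is used — it supplies uniform two-sided curvature bounds, a uniform rolling-ball radius (so that $K_{-\delta}$ is a genuine convex body and $\bd K\cap B(z,r)$ is a single controlled graph once $t$ is small), and the uniform small-cap estimates above. By contrast, the weighted surface body $K_\sigma^t$ requires no idea beyond its classical counterpart in \cite{SchWer04}; the densities enter only through the bounds $\varsigma_{\min}\le\varsigma\le\varsigma_{\max}$ on $\bd K$ and $\phi\le\phi_{\max}$ on a neighbourhood of $\bd K$.
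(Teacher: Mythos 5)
Your proof is correct and follows essentially the same route as the paper: both reduce the two bounds to uniform small-cap estimates (height, diameter, boundary measure, volume), derived from the two-sided curvature bounds available for a $C^2_+$ body, and then show that $\Vis_\sigma(z,t)$ is contained in a region of diameter $O(t^{1/(d-1)})$ and thickness $O(t^{2/(d-1)})$ around $z$, from which \eqref{eq:bound-Phi-Vis} and \eqref{eq:bound-Vis-disjoint} follow. The only difference is packaging: where the paper observes that $\Vis_\sigma(z,t)$ is the union of all caps of $\sigma$-measure at most $t$ containing $z$ and hence lies in the $(Mt)$-cap centred at $z$, you obtain the same containment by separating the segment $[z,y]$ from the inner parallel body $K_{-\delta}$ and then bound the measure by a boundary layer intersected with a ball of radius $r(t)$ — in effect a rigorous filling-in of the steps the paper leaves as a sketch.
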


The proofs of \eqref{eq:bound-Phi-Vis} and \eqref{eq:bound-Vis-disjoint} for the unweighted  case (i.e., when $\Phi$ is the Lebesgue measure and $\sigma$ is the $(d-1)$-dimensional Hausdorff measure) can be extracted from existing literature (see \cite[Lemma 6.3]{Vu} and \cite[Lemma 6.2]{RVW}), and the general case follows by a `sandwiching' argument similar to \cite[Lemma 5.2]{BLW}. For transparency, we sketch a direct argument below.

\begin{proof}
The result follows from elementary properties of caps. By definition, a cap in $K$ is a subset of the form $K \cap H^+$, where $H$ is an affine hyperplane. Any cap $C^K$ contains a unique point $z \in \bd K$ of maximal distance from $H$, which we call the \emph{center} of $C^K$. When $\sigma(C^K \cap \bd K)=t$, we call $C^K$ a $t$-cap. The important (and trivial) observation here is that $\Vis_{\sigma}(z,t)$ is precisely the union of all $t$-caps containing $z$. 

The proof requires the following estimates on caps: there exists positive constants $M, t_0, \rho_0$, depending only on $K$, $\sigma$ and $\Phi$, for which the following holds.
\begin{itemize}
	\item Any $t$-cap with $t \leq t_0$ has diameter $\leq M t^{\frac{1}{d-1}}$ and $\Phi$-measure $\leq M t^{\frac{d+1}{d-1}}$.
	\item Any subset $Y \subset \bd K$ with diameter $\rho \leq \rho_0$ is contained in the $t$-cap centered at any point $y \in Y$, where $t=M \rho^{d-1}$.
\end{itemize}
These facts can be proven by a simple direct computation, using the fact that our assumptions on $K$ imply uniform upper and lower (away from zero) bounds on the principle curvatures. Assuming this, it easily follows that for any $z\in \bd K$, $\Vis_\sigma(z,t)$ is contained in the $(Mt)$-cap centered at $z$. Using the bound on $\Phi$-measure in the first item, this fact implies \eqref{eq:bound-Phi-Vis}. Moreover, this fact also implies uniform bounds on the diameter of the sets $\left\{ y \in \bd K \,:\, \Vis_\sigma(z, t) \cap \Vis_\sigma(y,t) \neq \emptyset  \right\}$, which by the second item implies \eqref{eq:bound-Vis-disjoint}.
\end{proof}


Finally, we will make extensive use of the fact that $K_\sigma(n)$ contains the $\sigma$-surface body with overwhelming probability. More precisely, we require the following result from \cite[Lemma 4.2]{RVW}.
\begin{lemma}\label{lem-RVW-surface-body}
Let $K \subset \RR^d$ be a convex body of class $C^2_+$, and let $\sigma$ be a probability measure on $\bd K$ with positive and continuous density with respect to the $(d-1)$-dimensional Hausdorff measure. Then for any $\alpha > 0$ there exists $c =c(\alpha) > 0$ such that for $n$ sufficiently large one has, denoting $\tau = c \,\frac{\log n}{n}$,
\begin{equation*}
\PP \bigl( K_\sigma^\tau \not\subset K_\sigma(n) \bigr) \leq n^{-\alpha}.
\end{equation*}
\end{lemma}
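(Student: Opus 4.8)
The plan is to run the classical economic cap covering argument of B\'ar\'any and Larman, adapted to the measure $\sigma$. Since the density $\varsigma$ is positive and continuous on the compact set $\bd K$, it is bounded above and below away from $0$, so $\sigma$ is comparable to the normalized $(d-1)$-dimensional Hausdorff measure and all cap estimates below may be stated for $\sigma$ up to multiplicative constants. For $u \in S^{d-1}$ and $s \in (0,1)$ let $M(u,s) := \{y \in K : \langle y, u\rangle \ge h(u,s)\}$ denote the unique cap of $K$ with outer normal $u$ whose base has $\sigma$-measure $\sigma(\bd K \cap M(u,s)) = s$; strict convexity of $K$ (a consequence of $K\in C^2_+$) makes $h(u,s)$ well-defined and makes $M(u,\cdot)$ strictly increasing with respect to inclusion.

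First I would reduce the event $\{K_\sigma^\tau \not\subseteq K_\sigma(n)\}$ to the emptiness of a minimal $\tau$-cap. If $x \in K_\sigma^\tau \setminus K_\sigma(n)$, then, $K_\sigma(n)$ being a polytope, there is a closed halfspace $H^-$ with $x \in \interior H^-$ and $X_1,\dots,X_n \in \interior H^+$, so $\bd K \cap H^-$ contains none of the $X_i$. Were $\sigma(\bd K \cap H^-) \le \tau$, the hyperplane $H$ would appear in the intersection defining $K_\sigma^\tau$, forcing $K_\sigma^\tau \subseteq H^+$ and contradicting $x \in K_\sigma^\tau \cap \interior H^-$. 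Hence $K \cap H^- = M(u,s)$ with $u$ the outer normal of $H$ and $s := \sigma(\bd K \cap H^-) > \tau$, so monotonicity gives $M(u,\tau) \subseteq K \cap H^-$ and in particular $\bd K \cap M(u,\tau) \cap \{X_1,\dots,X_n\} = \emptyset$. Therefore
\[
\PP\bigl(K_\sigma^\tau \not\subseteq K_\sigma(n)\bigr) \;\le\; \PP\bigl(\exists\, u \in S^{d-1} : \bd K \cap M(u,\tau) \cap \{X_1,\dots,X_n\} = \emptyset\bigr).
\]

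The covering step uses that $K \in C^2_+$, i.e.\ the two-sided bounds on the principal curvatures of $\bd K$ already invoked in the proof of the visibility estimates above: a minimal $s$-cap has base of diameter $\asymp s^{1/(d-1)}$, and minimal caps in directions at angular distance $\le \delta$ are nested up to a fixed factor once $\delta \le c_1 s^{1/(d-1)}$, say $M(u_0, s/2) \subseteq M(u,s)$ whenever $\lvert u - u_0\rvert \le \delta$. Pick a $\delta$-net $u_1,\dots,u_m$ of $S^{d-1}$ with $\delta \asymp \tau^{1/(d-1)}$; as $S^{d-1}$ is $(d-1)$-dimensional this is possible with $m \le c_2\,\tau^{-1}$. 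With $C_j := M(u_j,\tau/2)$, every $M(u,\tau)$ contains some $C_j$, so an empty $M(u,\tau)$ forces an empty $C_j$, and a union bound yields
\[
\PP\bigl(K_\sigma^\tau \not\subseteq K_\sigma(n)\bigr) \;\le\; \sum_{j=1}^m \bigl(1 - \tfrac{\tau}{2}\bigr)^n \;\le\; c_2\,\tau^{-1} e^{-\tau n/2}.
\]
With $\tau = c\,\tfrac{\log n}{n}$ the right-hand side equals $\tfrac{c_2}{c}\,\tfrac{n^{\,1-c/2}}{\log n}$, which is at most $n^{-\alpha}$ for all sufficiently large $n$ once $c = c(\alpha)$ is chosen with $c/2 > \alpha + 1$.

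The only non-routine ingredient is the economic cap covering with cardinality $m = O(1/\tau)$ together with the nesting of minimal caps in nearby directions; both are classical for $C^2_+$ bodies (B\'ar\'any and Larman; the present statement is \cite[Lemma 4.2]{RVW}, and closely related cap estimates appear in \cite[Lemma 6.3]{Vu}). I expect the calibration of the nesting constant --- ensuring $M(u_0,\tau/2) \subseteq M(u,\tau)$ uniformly in $u$ from the single angular scale $\delta \asymp \tau^{1/(d-1)}$ --- to be the fiddliest point, but it reduces to the standard comparison of caps under the uniform curvature bounds on $\bd K$; everything else is the one-line union bound above.
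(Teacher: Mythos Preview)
The paper does not give its own proof of this lemma; it simply quotes it as \cite[Lemma~4.2]{RVW}. Your argument is the standard economic-cap-covering plus union-bound proof that underlies that reference, and it is correct: the reduction from $K_\sigma^\tau\not\subset K_\sigma(n)$ to an empty minimal $\tau$-cap is exactly right, the net size $m=O(\tau^{-1})$ and the nesting $M(u_0,\tau/2)\subset M(u,\tau)$ for $|u-u_0|\lesssim\tau^{1/(d-1)}$ follow from the uniform two-sided curvature bounds available for $C^2_+$ bodies, and the final arithmetic with $\tau=c\log n/n$ and $c>2(\alpha+1)$ is fine.
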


\subsection{A normal approximation bound}

The purpose of this section is to rephrase a very general normal approximation bound for non-linear functionals of independent and identically distributed random variables from \cite{Chatterjee,ChiShaoZhang,LachPecc}. We present it in the framework of general Polish spaces $E$ with a probability measure $\mu$, although in our application in the proof of Theorem \ref{thm:CLT} $E$ will be the boundary of a smooth convex body in $\RR^d$ and $\mu$ the probability measure $\sigma$ on $\bd K$. For $n\in\NN$, let $f:\bigcup_{k=1}^nE^k\to\RR$ be a symmetric and measurable function.
By this we mean that $f$ is a symmetric function acting on point configurations in $E$ of at most $n$ points. If $x=(x_1,\ldots,x_n)\in E^n$ and $i\in\{1,\ldots,n\}$, we introduce the notation
\begin{equation*}
x_{\neg i}:= (x_1,\dotsc,x_{i-1}, x_{i+1},\dotsc,x_n)\in E^{n-1}.
\end{equation*}
Similarly, for two indices $i,j\in\{1,\ldots,n\}$ with $i<j$ we denote by $x_{\neg i,j}\in E^{n-2}$ the $(n-2)$-tuple arising from $x$ by removing
both $x_i$ and $x_j$. We are now in the position to define the first- and second-order difference operator of $f$ by
\begin{align*}
D_i f(x) &:= f(x)-f(x_{\neg i}),
\intertext{and}
D_{i,j} f(x) &:= D_i(D_j f(x)) = f(x)-f(x_{\neg i})-f(x_{\neg j})+f(x_{\neg i,j}),
\end{align*}
respectively. In other words, $D_if(x)$ measures the effect on the functional $f$ when $x_i$ is removed from $x$, and similar interpretation is valid for $D_{i,j}f(x)$.

Let now $X=(X_1,\ldots,X_n)$ be an $n$-tuple of independent random elements from $E$ with distribution $\mu$, and let $X'$ and $X''$
be independent random copies of $X$ whose coordinates are denoted by $X_i'$ and $X_i''$, $i\in\{1,\ldots,n\}$, respectively.
By a recombination of $\{X,X',X''\}$ we understand a random vector $Z=(Z_1,\ldots,Z_n)$ having the property that $Z_i\in\{X_i,X_i',X''_i\}$ for each $i\in\{1,\ldots,n\}$. This allows us to introduce the following three quantities:
\begin{align*}
B_1(f) & := \sup_{(Y,Y',Z,Z')}\EE\big[\mathbf{1}\{D_{1,2}f(Y)\neq 0\}\mathbf{1}\{D_{1,3}f(Y')\neq 0\}\,(D_2f(Z))^2(D_3f(Z'))^2\big]\,,\\
B_2(f) & := \sup_{(Y,Z,Z')}\EE\big[\mathbf{1}\{D_{1,2}f(Y)\neq 0\}\,(D_1f(Z))^2(D_2f(Z'))^2\big]\,,\\
B_{3}(f) &:= \EE|D_1f(X)|^4\,,
\end{align*}
where in the definition of $B_1$ the supremum is taken over all $4$-tuples of random vectors $Y$, $Y'$, $Z$, and $Z'$,
which are recombinations of $\{X,X',X''\}$, and in the definition of $B_2$ the supremum is taken over all $3$-tuples of random vectors $Y$, $Z$ and $Z'$, which are recombinations of $\{X,X',X''\}$.

To measure the distance between (the laws of) two random variables $W$ and $V$ we use the \textit{Kolmogorov distance}. We recall that the Kolmogorov distance between $W$ and $V$ is given by
\begin{equation}\label{eq:DefKolmogorov}
d_{\Kol}(W,V) := \sup_{x\in\RR}\big|\PP(V\leq x)-\PP(W\leq x)\big|\,,
\end{equation}
and note that convergence of the Kolmogorov distance implies convergence in distribution.

We are now prepared to rephrase the following normal approximation bound from \cite{ChiShaoZhang}, which is essentially based on the previous works \cite{Chatterjee, LachPecc}.

\begin{lemma}\label{lem:CLTLachiezeReyPeccati}
	Fix $n\in\mathbb{N}$. Let $X_1,\dotsc,X_n$ be independent random elements taking values in a Polish space $E$ and are distributed according to a probability measure $\mu$, and let
	$f:\bigcup_{k=1}^n E^k\to\RR$ be a symmetric and measurable function.
	Define $W(n):=f(X_1,\ldots,X_n)$ and assume that $\EE\, W(n)=0$ and $\EE\, W(n)^2=1$. Then there exists an absolute constant $c>0$ such that
	\begin{equation}\label{eq:BoundWassersteinGeneral}
	\begin{split}
	d_{\Kol}\left(W(n),Z\right)
	&\leq c\sqrt{n}\left(n\sqrt{B_1(f)}+\sqrt{nB_2(f)}+\sqrt{B_{3}(f)}\right),
	\end{split}    
	\end{equation}
	where $Z$ is a standard Gaussian random variable.
\end{lemma}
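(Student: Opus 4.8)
The statement to be proven is the normal-approximation bound of Lemma \ref{lem:CLTLachiezeReyPeccati} itself, expressing $d_{\Kol}(W(n),Z)$ in terms of the three quantities $B_1(f)$, $B_2(f)$, $B_3(f)$. Since the lemma is explicitly described as a rephrasing of results in \cite{ChiShaoZhang,Chatterjee,LachPecc}, the honest proof is not to reconstruct the entire Stein-method apparatus from scratch, but to quote the relevant theorem from those references and verify that our quantities $B_1,B_2,B_3$ dominate the ones appearing there. So the first step is to recall the precise statement from \cite{ChiShaoZhang}: for a symmetric functional $W=f(X_1,\dots,X_n)$ of i.i.d.\ inputs with $\EE W=0$, $\EE W^2=1$, one has a Kolmogorov bound of the form $d_{\Kol}(W,Z)\le c(\sqrt{B_1'}+\sqrt{B_2'}+B_3')$ (up to the precise normalisation of the $B_i'$), where $B_1'$, $B_2'$, $B_3'$ are certain expectations involving first- and second-order difference operators $D_i f$, $D_{i,j}f$ evaluated on independent copies and recombinations of the input vector. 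The key point is that \cite{ChiShaoZhang} works with a so-called exchangeable-pair or second-order Poincaré-type construction in which one resamples coordinates; the quantities there are, after a change of notation, sums over $i$ (or over pairs $i,j$) of terms that are each bounded by the suprema $B_1(f)$, $B_2(f)$, $B_3(f)$ by symmetry of $f$.

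The second step is the bookkeeping that turns the reference's bound into the form \eqref{eq:BoundWassersteinGeneral}. In \cite{ChiShaoZhang} the bound typically reads (schematically)
\[
d_{\Kol}(W,Z)\ \le\ c\Big(\sqrt{\textstyle\sum_{i,j}\EE[\dots]}\ +\ \sqrt{\textstyle\sum_i \EE[\dots]}\ +\ \textstyle\sum_i\EE|D_if|^3 \text{ or }{}^4\Big),
\]
where the first double sum has $O(n^3)$ terms of the type controlled by $B_1(f)$ (one free index for each of the two indicator-bearing difference operators, plus the shared index $1$), the second sum has $O(n^2)$ terms controlled by $B_2(f)$, and the third sum has $n$ terms controlled by $B_3(f)$. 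Estimating each summand by the corresponding supremum and each count by the stated power of $n$ gives $\sum_{i,j}(\cdots)\le c\,n^3 B_1(f)$, $\sum_i(\cdots)\le c\,n^2 B_2(f)$, $\sum_i(\cdots)\le c\,n\,B_3(f)^{?}$, and hence $d_{\Kol}(W,Z)\le c(\sqrt{n^3 B_1(f)}+\sqrt{n^2 B_2(f)}+\sqrt{n B_3(f)})=c\sqrt n\,(n\sqrt{B_1(f)}+\sqrt{n B_2(f)}+\sqrt{B_3(f)})$, which is exactly \eqref{eq:BoundWassersteinGeneral}. Two small care points here: first, one must check that replacing third moments by fourth moments (which is what $B_3(f)=\EE|D_1f(X)|^4$ records) is legitimate — this follows because the original references' third-moment term can be bounded by a fourth-moment term times the factor $\sqrt n$ already present, or because one uses the version of their theorem stated directly with fourth moments; second, one must verify that taking the supremum over all recombinations $(Y,Y',Z,Z')$ (resp.\ $(Y,Z,Z')$) of $\{X,X',X''\}$ covers every combination of independent-copy indices that actually appears in the reference's expression — this is where the three-vector setup $\{X,X',X''\}$ is exactly enough, since each difference operator $D_i$ or $D_{i,j}$ introduces at most one resampled coordinate and distinct operators may need distinct fresh copies.

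The third step is to confirm the measurability and symmetry hypotheses transfer correctly: $f$ is assumed symmetric and measurable on $\bigcup_{k=1}^n E^k$, the inputs $X_1,\dots,X_n$ are i.i.d.\ from $\mu$ on a Polish space $E$, and $\EE W(n)=0$, $\EE W(n)^2=1$ are assumed outright, so the standardisation required by \cite{ChiShaoZhang} holds; there is nothing to prove beyond matching conventions. I expect the main obstacle to be purely notational rather than mathematical: the cited papers \cite{Chatterjee,LachPecc,ChiShaoZhang} each use slightly different difference operators (add-one-point versus remove-one-point, glauber dynamics versus resampling, Wasserstein versus Kolmogorov), and the work is in carefully identifying which of their theorems yields a \emph{Kolmogorov} bound with fourth-moment terms and in pinning down the combinatorial counts $n^3,n^2,n$ of summands so that the supremum-plus-counting estimate produces precisely the constants in \eqref{eq:BoundWassersteinGeneral}. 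Once those identifications are made, the proof is a one-paragraph citation-and-count argument, which is presumably why the authors present it as a "rephrasing" rather than a self-contained proof.
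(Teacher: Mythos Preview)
Your approach is correct and in fact goes beyond what the paper does: the paper gives no proof of this lemma at all, simply stating it as a rephrasing of the normal approximation bound from \cite{ChiShaoZhang} (based on \cite{Chatterjee,LachPecc}) and then moving on to the proof of Theorem~\ref{thm:CLT}. Your plan to cite the relevant theorem and perform the bookkeeping that converts the reference's sums over indices into the suprema $B_1(f),B_2(f),B_3(f)$ via symmetry and the counts $n^3,n^2,n$ is exactly the right way to justify the ``rephrasing'', and is more explicit than anything the paper provides.
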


\section{Proof of Theorem \ref{thm:CLT}}\label{sec:proof_main_thm}

This section is devoted to the proof of our main result about weighted random inscribed polytopes. The proof uses Lemma~\ref{lem:CLTLachiezeReyPeccati}. To obtain the required bound on the right hand side of \eqref{eq:BoundWassersteinGeneral} we need to combine an upper bound on the difference operators with a lower bound on the variance. As the two are independent, we treat them separately, the latter in Section~\ref{subsec:variance} and the former in Section~\ref{subsec:proof_main}

\subsection{A lower bound for the variance}\label{subsec:variance}

Richardson, Vu and Wu \cite[Theorem 1.1]{RVW} established a lower bound for the variance of the volume of the random inscribed polytope $K_\sigma(n)$ by adapting the proof of Reitzner \cite[Theorem 3]{Reitzner:2005}. With some further adaptions to their arguments we will show that the the following more general theorem holds as well.

\begin{theorem}\label{thm:var-bound}
	Let $K\subset \RR^d$ be a convex body of class $C_+^2$ and fix a probability measure $\sigma$ on $\bd K$ with continuous density $\varsigma>0$ with respect to the $(d-1)$-dimensional Hausdorff measure. Then set $K_\sigma(n)$ as the random inscribed polytope generated as the convex hull of $n$ independent random points distributed with respect to $\sigma$.
	Furthermore, let $\Phi$ be a measure on $K$ with continuous density $\phi>0$ with respect to the Lebesgue measure on $K$. Then there exist constants $c=c(K,\varsigma,\phi)>0$ and $N=N(K,\varsigma,\phi)\in\mathbb{N}$ such that for all $n\geq N$ we have that
	\begin{equation*}
	\Var \Phi(K_\sigma(n)) \geq cn^{-\frac{d+3}{d-1}}.
	\end{equation*}
\end{theorem}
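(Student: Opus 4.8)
The plan is to follow the Efron--Stein / jackknife lower bound strategy pioneered by Reitzner and adapted by Richardson--Vu--Wu, upgrading it to the weighted setting. Recall that the Efron--Stein inequality gives the matching-order \emph{upper} bound on the variance, but to get a \emph{lower} bound we use the variant (sometimes attributed to Reitzner) which says that if $W(n)=\Phi(K_\sigma(n))$ then, writing $W_i(n)$ for the same functional with the point $X_i$ deleted,
\[
\Var W(n) \geq \frac{1}{2}\,\EE\bigg[\sum_{i=1}^n \big(\EE[\,W(n)-W_i(n)\mid X_1,\dots,X_n\,]\big)^2\bigg]
\]
is \emph{not} quite what we want; instead the useful bound is the one coming from a single conditioning step: for the martingale difference decomposition $W(n)-\EE W(n)=\sum_{i=1}^n \Delta_i$ with $\Delta_i=\EE[W(n)\mid X_1,\dots,X_i]-\EE[W(n)\mid X_1,\dots,X_{i-1}]$ one has $\Var W(n)=\sum_i \EE \Delta_i^2 \geq \EE\Delta_n^2$, and by symmetry it suffices to lower bound $\EE(\EE[W(n)-W(n-1)\mid X_1,\dots,X_n])^2$ where $W(n-1)=\Phi(K_\sigma(n-1))$ is built from the first $n-1$ points. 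So the first step is to reduce, via this martingale/symmetry argument exactly as in \cite[Theorem 3]{Reitzner:2005} and \cite[Theorem 1.1]{RVW}, to producing a lower bound of order $n^{-\frac{d+3}{d-1}}$ for $\EE\big[(\EE[\Phi(K_\sigma(n))-\Phi(K_\sigma(n-1))\mid \mathcal F_n])^2\big]$, where $\mathcal F_n=\sigma(X_1,\dots,X_n)$.

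The second step is the geometric heart: one must show that the conditional expectation of the added volume $\Phi(K_\sigma(n))-\Phi(K_\sigma(n-1))$ — i.e.\ of $\Phi$ of the ``cap'' chopped off by removing $X_n$, averaged over the position of $X_n$ given the others — is, with probability bounded below, of order at least $n^{-\frac{d+1}{d-1}}$. One sets up an explicit favourable event: there is a boundary region $Y\subset\bd K$ of $\sigma$-measure of order $1/n$ such that, conditionally on the first $n-1$ points, with probability bounded away from zero none of them lies in a slightly larger region, and then the added point $X_n$ falling into $Y$ contributes a vertex whose associated cap has $\Phi$-volume $\gtrsim n^{-\frac{d+1}{d-1}}$ with conditional probability $\gtrsim$ constant over the location of $X_n$ in $Y$. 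Here is exactly where the weighting enters and where the cap estimates proved in the ``visibility'' lemma above (a $t$-cap has diameter $\lesssim t^{1/(d-1)}$ and $\Phi$-measure $\lesssim t^{(d+1)/(d-1)}$, and conversely small boundary pieces sit inside small caps) get used to translate between $\sigma$-measure of boundary regions, Euclidean size of caps, and $\Phi$-measure of caps. The continuity and strict positivity of $\varsigma$ and of $\phi$ near $\bd K$, together with the $C^2_+$ assumption giving uniform two-sided curvature bounds, are precisely what make all these estimates uniform in the base point $z\in\bd K$; morally one is sandwiching $\sigma$ between multiples of Hausdorff measure and $\Phi$ between multiples of Lebesgue measure on a neighbourhood of $\bd K$, reducing to the unweighted computation of \cite{RVW,Reitzner:2005} up to constants. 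Combining: one gets $\EE\Delta_n^2 \gtrsim \PP(\text{favourable event})\cdot (n^{-\frac{d+1}{d-1}})^2 \cdot (\text{conditional probability})^2 \gtrsim \frac1n\cdot n^{-\frac{2(d+1)}{d-1}} = n^{-\frac{d+3}{d-1}}$, which is the claim. The use of Lemma~\ref{lem-RVW-surface-body} (that $K^\tau_\sigma\subset K_\sigma(n)$ with overwhelming probability) enters to control the position of the first $n-1$ points and to ensure the relevant caps actually sit on the ``outside'' of the random polytope.

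The main obstacle I expect is the second step: making the ``favourable event'' argument fully rigorous and uniform. The delicate points are (a) lower bounding, uniformly over configurations of $X_1,\dots,X_{n-1}$ lying in the favourable event, the \emph{conditional} probability that the cap cut off by $X_n$ has $\Phi$-volume of the full order $n^{-(d+1)/(d-1)}$ — one needs that a positive fraction of placements of $X_n$ in the chosen region $Y$ really do produce a large cap, not a degenerate sliver, which requires controlling the ``floor'' hyperplane spanned by the nearby old points and using the curvature lower bound; and (b) checking that the favourable event itself has probability bounded below independent of $n$ — this is a Poisson-type estimate, $\PP(\text{no old point in a region of }\sigma\text{-measure }\sim c/n)\approx e^{-c}$, which is standard but must be combined with the surface-body containment. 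Everything else — the martingale reduction, the cap geometry, the sandwiching of the weights — is routine given the tools already assembled, so I would present step one briefly and devote the bulk of the write-up to carefully reproducing and re-weighting the Richardson--Vu--Wu construction, citing \cite{Reitzner:2005,RVW} for the parts that go through verbatim.
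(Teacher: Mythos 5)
Your geometric ``second step'' (localized caps at scale $h_n\sim n^{-2/(d-1)}$, favourable events with probability bounded below, sandwiching $\sigma$ and $\Phi$ between multiples of Hausdorff and Lebesgue measure) is in the right spirit and is essentially what the paper's Lemma \ref{lem:var-bound2} provides. The genuine gap is in your first step, the reduction. From the orthogonal martingale decomposition you keep only the last increment, $\Var\Phi(K_\sigma(n))\ge\EE\Delta_n^2$, and try to show this single term already has order $n^{-(d+3)/(d-1)}$. It cannot: $\Delta_n=\Phi(K_\sigma(n))-\EE[\Phi(K_\sigma(n))\mid X_1,\dots,X_{n-1}]$ is at most of the order of the $\Phi$-volume added by one boundary point, which on the high-probability event of Lemma \ref{lem-RVW-surface-body} is bounded by the $\Phi$-measure of a visibility region, i.e.\ by \eqref{eq:bound-Phi-Vis} with $t\asymp\log n/n$. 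Hence $\EE\Delta_n^2=O\bigl(n^{-2(d+1)/(d-1)}(\log n)^{c}\bigr)$, which is smaller than the target $n^{-(d+3)/(d-1)}$ by essentially a factor of $n$: the variance aggregates roughly $n$ such contributions, and discarding all but one increment loses exactly that factor. Your closing arithmetic hides this, since $\tfrac1n\,n^{-2(d+1)/(d-1)}=n^{-(3d+1)/(d-1)}\neq n^{-(d+3)/(d-1)}$ (for $d=2$: $n^{-7}$ versus $n^{-5}$). Moreover ``by symmetry'' does not transfer a bound on the last increment to the others: the points are exchangeable but the filtration is not, and $\EE\Delta_i^2$ depends on $i$ (for symmetric statistics of i.i.d.\ points it is non-decreasing in $i$), so the increment you propose to bound is the one allowed to be largest, not a representative of the sum. (Also, $W(n)-W(n-1)$ is already $\sigma(X_1,\dots,X_n)$-measurable, so your displayed conditional expectation is vacuous and differs from $\Delta_n$.)

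To repair this you must retain a sum of order $n$ of localized contributions. That is precisely how the paper (following Richardson--Vu--Wu and Reitzner) proceeds, not via martingale increments but via the conditional variance inequality $\Var\Phi(K_\sigma(n))\ge\EE\,\Var[\Phi(K_\sigma(n))\mid\mathcal F]$: an economic cap covering produces $\sim n$ disjoint caps $C^K(x_j,h_n)$ with $h_n\asymp n^{-2/(d-1)}$; on events $A_j$ of probability bounded below, one point is left ``free'' in a region $D_0'(x_j)$ while $\mathcal F$ reveals everything else; the flatness condition \eqref{eqn:independence-cond} guarantees that free points in different caps share no edge, so the conditional variance splits as the sum \eqref{eqn:independence_cond2} of local variances, each of order $h_n^{d+1}$ by Lemma \ref{lem:var-bound2}; summing gives $c\,n\,h_n^{d+1}=c\,n^{-(d+3)/(d-1)}$. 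If you insist on the martingale route, you would instead need a lower bound of order $n^{-2(d+1)/(d-1)}$ for $\EE\Delta_i^2$ uniformly over a constant proportion of the indices $i$ (or, equivalently, a sharp lower bound on the first-order Hoeffding projection), which your sketch does not supply and which is not easier than the conditional-variance argument.
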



One of the key constructions is to approximate $\bd K$ around a fixed point $x\in \bd K$ by an elliptic paraboloid $Q_x$. If we choose coordinates such that $x$ is at the origin and such that $\RR^{d-1}$ is the tangent hyperplane to $\bd K$ at $x$ where the outer unit normal $n_K(x)$ of $\bd K$ at $x$ is $-e_d$. Then
\begin{equation*}
Q_x := \{z\in\RR^d : \kappa_1(x) z_1^2 + \dotsc +\kappa_{d-1}(x) z_{d-1}^2 \leq 2 z_d\},
\end{equation*}
where $\kappa_1(x),\dotsc,\kappa_{d-1}(x)$ are the principal curvatures of $\bd K$ at $x$.
We may map the standard elliptic paraboloid $E=\{z\in\RR^d: z_1^2+\dotsc+z_{d-1}^2 \leq z_d\}$ to $Q_x$ via a linear map, i.e., if we set
\begin{equation}\label{eq:MapA}
A_x := \mathrm{diag}\left(\sqrt{\frac{2h}{\kappa_1(x)}},\dotsc,\sqrt{\frac{2h}{\kappa_{d-1}(x)}},h\right),
\end{equation}
then $Q_x = A_xE$. Here, the dependence on $h>0$ is chosen in such a way that the cap $C^{E}(0,1):=\{z\in E : z_d\leq 1\}$ of height $1$ is mapped to
\begin{equation*}
C^{Q_x}(x,h) := \{z\in Q_x: z_d\leq h\} = A_xC^{E}(0,1).
\end{equation*}
Note also that
\begin{equation*}
\det A_x = 2^{\frac{d-1}{2}} \kappa(x)^{-\frac{1}{2}} h^{\frac{d+1}{2}},
\end{equation*}
where $\kappa(x) := \prod_{i=1}^{d-1} \kappa_i(x)$ is the Gauss--Kronecker curvature of $\bd K$ at $x$.
Since $K$ is of class $C_+^2$ there are $h_0>0$ and $c_0>1$ such that for all $h\in (0,h_0)$ we have that
\begin{equation}\label{eqn:det_A}
\frac{1}{c_0} h^{\frac{d+1}{2}} \leq \left|\det A_x\right| \leq c_0 h^{\frac{d+1}{2}}.
\end{equation}
Since the (Lebesgue) density function $\phi$  of $\Phi$ is positive and continuous near $\bd K$, we also find $c_1>1$ such that for all $h$ small enough we have that
\begin{equation*}
\frac{1}{c_1} \Vol_d(C^{Q_x}(x,h)) (\phi(x)+o_h(1)) \leq  \Phi(C^{K}(x,h)) \leq c_1  \Vol_d(C^{Q_x}(x,h)) (\phi(x)+o_h(1)),
\end{equation*}
where $o_h(1) \to 0$ as $h\to 0^+$, and $C^K(x,h):= \{z\in K : \langle x-z,n_K(x) \rangle \leq h\}$ is the cap of $K$ of height $h$ with apex in $x$. Here we use the fact that for $h>0$ small enough (independently of $x \in \bd K$), the cap $C^{K}(x,h)$ is contained in the neighbourhood of $\bd K$ where $\phi$ is continuous.

\smallskip
Next, let us repeat the random simplex construction in the standard paraboloid $E$. 
In the following we denote by $H(u,t)$ the affine hyperplane with unit normal $u\in S^{d-1}$ and signed distance $t$ from the origin, i.e., $H(u,t) = \{x\in\RR^d : \langle x, u\rangle = t\}$.
We first consider the simplex $S=[v_0,\dotsc,v_d]$ in the cap $C^{E}(0,1)$, where the vertex $v_0$ is the origin and $[v_1,\dotsc,v_d]$ is a regular simplex inscribed in the $(d-1)$-dimensional ball $\{z\in E: z_d = h_d\}$, where $h_d<\frac{1}{2d^2}$ is chosen small enough so that
\begin{equation*}
    \{\lambda z : \lambda\geq 0,\, z\in S\} \supset (2E)\cap H(e_d,1) = \{(z_1,\dotsc,z_{d-1},1) \in \RR^d: \|(z_1,\dotsc,z_{d-1})\|=\sqrt{2}\}.
\end{equation*}
This condition ensures that the cone spanned by $S$ is ``flat'' enough and will be important later on, see \eqref{eqn:independence-cond}, to ensure a certain independence property, see \eqref{eqn:independence_cond2}.

\smallskip
Now we consider the orthogonal projection $\mathrm{proj}_{\RR^{d-1}}: \RR^d \to \RR^{d-1}$, defined by $\mathrm{proj}_{\RR^{d-1}}(z) = (z_1,\dotsc,z_{d-1})$. We consider a balls $B_i\subset \RR^{d-1}$ of radius $r>0$ around $v_0=0=v_0'$ and $\mathrm{proj}_{\RR^{d-1}}(v_i)$ for $i=1,\dotsc,d$. We further set $B_i' := \bd E\cap \mathrm{proj}_{\RR^{d-1}}^{-1}(B_i)$ for $i=0,\dotsc,d$. We will choose $r>0$ later, but it will be small enough so that for all $w_i\in B_i'$, $i=0,\dotsc,d$, we have that $[w_0,\dotsc,w_d]$ is sufficiently close to $S=[v_0,\dotsc,v_d]$. In particular, we have that
\begin{equation*}
\{\lambda z: \lambda\geq 0,\, z\in [w_0,\dotsc,w_d]\} \supset (2E)\cap H(e_d,1),
\end{equation*}
for all $w_i\in B_i'$, $i=0,\dotsc,d$ (see Figure \ref{fig:simplex}).
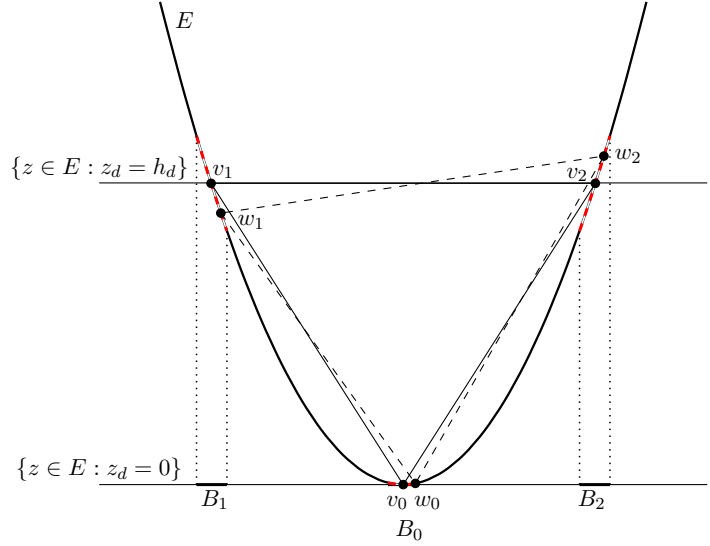
\begin{figure}[t]
	\centering
	\begin{tikzpicture}[scale=0.8, transform shape]
	\draw[black, line width = 0.30mm]   plot[smooth,domain=-4:4] (\x, {0.5*(\x)^2});
	\draw (-5,5) -- (5,5);
	\draw (-5,0) -- (5,0);
	
	\draw[white, line width = 0.20mm]   plot[smooth,domain=-3.4:-2.9] (\x, {0.5*(\x)^2});
	\draw[black, line width = 0.40mm,dashed,red]  plot[smooth,domain=-3.4:-2.9] (\x, {0.5*(\x)^2});
	\draw[black,line width=0.4mm] (-3.4,0) -- (-2.9,0);
	\draw[black,line width=0.4mm] (3.4,0) -- (2.9,0);
	\draw[black,line width=0.4mm] (-0.25,0) -- (0.25,0);
	\draw[black,dotted,line width=0.2mm] (-3.4,0) -- (-3.4,0.5*3.4*3.4);
	\draw[black,dotted,line width=0.2mm] (-2.9,0) -- (-2.9,0.5*2.9*2.9);
	\draw[black,dotted,line width=0.2mm] (3.4,0) -- (3.4,0.5*3.4*3.4);
	\draw[black,dotted,line width=0.2mm] (2.9,0) -- (2.9,0.5*2.9*2.9);
	
	\draw[white, line width = 0.20mm]   plot[smooth,domain=2.9:3.4] (\x, {0.5*(\x)^2});
	\draw[black, line width = 0.40mm,dashed,red]   plot[smooth,domain=2.9:3.4] (\x, {0.5*(\x)^2});
	
	\draw[white, line width = 0.20mm]   plot[smooth,domain=-0.25:0.25] (\x, {0.5*(\x)^2});
	\draw[black, line width = 0.40mm,dashed,red]   plot[smooth,domain=-0.25:0.25] (\x, {0.5*(\x)^2});
	
	\filldraw (-3.16,0.5*3.16*3.16) circle (2pt);
	\filldraw (-3,0.5*3*3) circle (2pt);
	\filldraw (3.16,0.5*3.16*3.16) circle (2pt);
	\filldraw (3.3,0.5*3.3*3.3) circle (2pt);
	\filldraw (0,0) circle (2pt);
	\filldraw (0.2,0.5*0.2*0.2) circle (2pt);
	
	\draw (-3.16,0.5*3.16*3.16) -- (0,0) -- (3.16,0.5*3.16*3.16) -- (-3.16,0.5*3.16*3.16);
	\draw[dashed] (-3,0.5*3*3) -- (0.2,0.5*0.2*0.2) -- (3.3,0.5*3.3*3.3) -- (-3,0.5*3*3);
	
	\node at (-0.1,-0.3) {$v_0$};
	\node at (0.4,-0.3) {$w_0$};
	\node at (-3.16+0.2,0.5*3.16*3.16+0.2) {$v_1$};
	\node at (3.16-0.3,0.5*3.16*3.16+0.15) {$v_2$};
	\node at (-3+0.5,0.5*3*3-0.15) {$w_1$};
	\node at (3.3+0.4,0.5*3.3*3.3) {$w_2$};
	\node at (-3.6,7.7) {$E$};
	\node at (-5,5.25) {$\{z\in E:z_d=h_d\}$};
	\node at (-5,0.25) {$\{z\in E:z_d=0\}$};
	\node at (-3.1,-0.25) {$B_1$};
	\node at (3.1,-0.25) {$B_2$};
	\node at (0.1,-0.75) {$B_0$};
	\end{tikzpicture}
	\caption{Construction of the random simplex $[w_0, \ldots, w_d]$}
	\label{fig:simplex}
\end{figure}

Furthermore, if $W$ is randomly distributed on $B_0'$ with respect to a continuous and positive probability density $\vartheta$, then there exists $c_3>0$ such that
\begin{equation*}
\Var_W(\Vol_d([W,v_1,\dotsc,v_d])) \geq c_3>0,
\end{equation*}
where for some random element $X$ the notation $\Var_X$ (and also $\EE_X$ below) indicates that the variance (or the expectation) is taken with respect to the law of $X$.

\smallskip
Using the linear transformation $A_x$ defined at \eqref{eq:MapA}, we set 
\begin{equation*}
D_i(x) := A_x B_i \subset T_x\bd K\cong \RR^{d-1},
\end{equation*}
where $T_x\bd K$ is the tangent space of $\bd K$ at $x$, which is isometric ($\cong$) to $\RR^{d-1}$. Further, we set
\begin{equation*}
D_i'(x) := \tilde{f}_x(D_i(x)) \subset \bd K \cap C^K(x,h),
\end{equation*}
where $f_x:\RR^{d-1}\to\RR$ locally defines $\bd K$ around $x$ via $\tilde{f}_x(y) = (y,f_x(y)) \in \bd K$. We also stress that $D_i'(x)\subset \bd K$ is \emph{not} the image of $B_i'$ under $A_x$ since $A_xB_i'\subset \bd Q_x$.
Finally, for sufficiently small $h>0$, we find that
\begin{equation*}
\frac{1}{c_4} h^{\frac{d-1}{2}} \leq \sigma(D_i'(x)) \leq c_4 h^{\frac{d-1}{2}},
\end{equation*}
for some constant $c_4>1$ and
\begin{equation}\label{eqn:independence-cond}
\begin{split}
\{\lambda z: \lambda\geq 0, \, z\in [y_0,\dotsc,y_d]\} &\supset (2Q_x)\cap H(n_K(x),\langle x,n_K(x) \rangle -h)\\
&\supset K \cap H(n_K(x), \langle x, n_K(x)\rangle-h),
\end{split}
\end{equation}
for all $y_i\in D'_i(x)$.

\medskip
We are now ready to adapt to our situation the main lemma \cite[Lemma 3.1]{RVW}, that has to be changed in the proof of \cite[Theorem 1.1]{RVW}.

\begin{lemma}\label{lem:var-bound2}
	There exists $r_0>0$ such that for all $r\in(0,r_0)$ there is $h_0=h_0(r)>0$ and $c_5=c_5(r)>1$ such that for all $y_i\in D_i'(x)$, $i=1,\dotsc,d$, and $h\in(0,h_0)$ we have that
	\begin{equation}\label{eqn:local_variance_bound}
	\frac{1}{c_5} h^{d+1} \leq \Var_Y \Phi([Y,y_1,\dotsc,y_d]) \leq c_5 h^{d+1},
	\end{equation}
	where $Y$ is a random point in $D_0'(x)\subset \bd K$ distributed with respect to a continuous density function $\varsigma>0$.
\end{lemma}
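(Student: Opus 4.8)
The plan is to transport the variance estimate from the standard paraboloid $E$ (where it holds by the non-degeneracy statement $\Var_W(\Vol_d([W,v_1,\dotsc,v_d])) \geq c_3 > 0$ recorded just above) to the cap $C^K(x,h)$ via the affine map $A_x$, and then to control the error incurred by replacing the paraboloid $Q_x = A_xE$ with the actual boundary $\bd K$ and the Lebesgue measure by $\Phi$. First I would fix $x \in \bd K$, work in the adapted coordinates in which $x$ is the origin, $\RR^{d-1}$ is tangent to $\bd K$ at $x$, and $n_K(x) = -e_d$, and write $\Phi([Y,y_1,\dotsc,y_d])$ as an integral over the simplex. The key identity is that for a random point $Y = \tilde f_x(W')$ with $W' \in D_0'(x)$, the volume functional $\Vol_d([Y,y_1,\dotsc,y_d])$ differs from the corresponding functional on the paraboloid only through: (i) the vertical displacement $f_x(w) - q_x(w)$ between $\bd K$ and $Q_x$, which is $o(|w|^2) = o(h)$ uniformly by the $C^2_+$ assumption; and (ii) the weight $\phi$, which on the cap equals $\phi(x) + o_h(1)$ by continuity of $\phi$ near $\bd K$. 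So writing $\Phi([Y,y_1,\dotsc,y_d]) = (\phi(x)+o_h(1))\,\Vol_d([Y,y_1,\dotsc,y_d])$ and substituting $z = A_x \zeta$, one gets
\[
\Vol_d([Y,y_1,\dotsc,y_d]) = \det(A_x)\,\big(\Vol_d([W,v_1,\dotsc,v_d]) + o(1)\big),
\]
where $W = A_x^{-1}(\text{the paraboloid point above }w)$ ranges over $B_0'$ with a continuous positive density $\vartheta$ (the push-forward of $\varsigma$ under $A_x^{-1}\circ \tilde f_x^{-1}$, up to a Jacobian which is continuous and positive), and the $o(1)$ term is uniform in the $y_i$ and goes to $0$ as $h \to 0$.

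Next I would take the variance over $Y$. Since $\det(A_x)$ and the factor $\phi(x)+o_h(1)$ are constants (not depending on $Y$), they pull out of the variance, giving
\[
\Var_Y \Phi([Y,y_1,\dotsc,y_d]) = (\phi(x)+o_h(1))^2 \det(A_x)^2 \, \Var_W\big(\Vol_d([W,v_1,\dotsc,v_d]) + o(1)\big).
\]
The inner variance converges to $\Var_W(\Vol_d([W,v_1,\dotsc,v_d]))$ as $h \to 0$, uniformly in the $y_i \in D_i'(x)$ and in $x \in \bd K$ (here the uniform curvature bounds on $K$ and the compactness of $\bd K$ are used to make all the $o(1)$'s uniform). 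Since this limiting variance is bounded below by $c_3 > 0$ (for the lower bound) and trivially bounded above (the simplex is contained in a fixed bounded region after rescaling, so the volume functional is uniformly bounded, for the upper bound), and since $\phi(x) > 0$ is bounded away from $0$ and $\infty$ on a neighbourhood of $\bd K$, while by \eqref{eqn:det_A} one has $c_0^{-1} h^{(d+1)/2} \leq |\det A_x| \leq c_0 h^{(d+1)/2}$, we conclude that there is $c_5 = c_5(r) > 1$ and $h_0(r) > 0$ with
\[
\frac{1}{c_5}\, h^{d+1} \leq \Var_Y \Phi([Y,y_1,\dotsc,y_d]) \leq c_5\, h^{d+1}
\]
for all $h \in (0,h_0)$, all $x \in \bd K$, and all $y_i \in D_i'(x)$, which is the claim.

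The main obstacle is bookkeeping the two approximations uniformly. Concretely, one must check that the map $\tilde f_x$ is a bi-Lipschitz-close-to-linear perturbation of the paraboloid chart $w \mapsto A_x(\dots)$ on the cap $C^K(x,h)$, with constants depending only on the uniform bounds on the principal curvatures of $K$ — this is where the $C^2_+$ hypothesis does its work — and that the pushed-forward density $\vartheta$ on $B_0'$ stays in a fixed compact subset of $(0,\infty)$ so that the non-degeneracy of the paraboloid variance (which was stated for \emph{some} continuous positive density) applies with a constant $c_3$ independent of $x$ and $h$; a compactness argument over the space of admissible densities handles this. The role of the flatness condition \eqref{eqn:independence-cond} on the cone spanned by the $y_i$ is only needed later (for the independence property), not here, so it may be ignored in this lemma beyond ensuring $r$ is small enough that the $D_i'(x)$ are well-defined and the $o(1)$ estimates kick in. Everything else is the routine change-of-variables computation sketched above.
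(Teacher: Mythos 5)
Your proposal is correct and follows essentially the same route as the paper: approximate $\Phi$ on the cap by $(\phi(x)+o_h(1))\Vol_d$, transport everything to the standard paraboloid via $A_x$, invoke the positive model variance $\Var_W \Vol_d([W,v_1,\dotsc,v_d])$, and conclude with $|\det A_x|\asymp h^{(d+1)/2}$ from \eqref{eqn:det_A}. The paper organizes this through the first and second moments ($\psi_1,\psi_2$, following the claims of Richardson--Vu--Wu) rather than pulling the perturbed factors out of the variance directly, but that is a cosmetic difference, provided you justify (as you indicate) that the $o_h(1)$ errors, which do depend on $Y$, are uniform, e.g.\ via the triangle inequality in $L^2$.
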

\begin{proof}
	To prove \cite[Lemma 3.1]{RVW} one first shows \cite[Claim 8.1]{RVW}, where the first and second moment of the volume are asymptotically bounded. Following the proof of \cite[Claim 8.1]{RVW} and \cite[Claim 8.2]{RVW} we obtain
	\begin{align*}
	\EE_Y\Phi([Y,y_1,\dotsc,y_d]) 
	&= (1+o_{r,h}(1)) \frac{1}{\Vol_{d-1}(B_0)} \int_{B_0} \Phi([\tilde{f}_x(A_x'z),y_1,\dotsc,y_d]) \, \dint z,
	\end{align*}
	where we recall from \eqref{eq:MapA} that $A_x$ is the linear transformation that maps the standard paraboloid $E$ to the approximating paraboloid $Q_x=A_xE$ of $\bd K$ around $x$ and $A_x'$ is the restriction of $A_x$ to $\RR^{d-1}$, i.e., $A_x'=\mathrm{diag}\left(\sqrt{\frac{2h}{\kappa_1(x)}},\dotsc,\sqrt{\frac{2h}{\kappa_{d-1}(x)}}\right)$. Now, since $\phi$ is continuous at $x\in\bd K$ and since $[Y,y_1,\dotsc, y_d]\subset C^K(x,h)$, we find
	\begin{align*}
	\Phi([\tilde{f}_x(A_x'z),y_1,\dotsc,y_d]) 
	&= \Vol_d([\tilde{f}_x(A_x'z), y_1, \dotsc, y_d]) (\phi(x)+o_{h}(1)).
	\end{align*}
	By setting
	\begin{equation*}
	\psi_1(r) := \frac{1}{\Vol_{d-1}(B_0)} \int_{B_0} \Vol_d([\tilde{b}(z),v_1,\dotsc,v_d])\, \mathrm{d}z,
	\end{equation*}
	where $\tilde{b}(z) = (z,\|z\|^2)$ parameterizes $E$, we therefore derive
	\begin{equation*}
	\lim_{h\to 0^+} \frac{\EE_Y \Phi([Y,y_1,\dotsc,y_d])}{\left|\det A_x\right|\psi_1(r)} = (\phi(x)+o_r(1)),
	\end{equation*}
	similar to \cite[Equation (33)]{RVW}. Analogously, by setting
	\begin{equation*}
	\psi_2(r) := \frac{1}{\Vol_{d-1}(B_0)} \int_{B_0} \Vol_d([\tilde{b}(z),v_1,\dotsc,v_d])^2\, \mathrm{d}z,
	\end{equation*}
	we obtain
	\begin{equation*}
	\lim_{h\to 0^+} \frac{\EE_Y \Phi([Y,y_1,\dotsc,y_d])^2}{\left|\det A_x\right|^2\psi_2(r)} = (\phi(x)^2+o_r(1)).
	\end{equation*}
	Hence,
	\begin{align*}
	\lim_{h\to 0^+} \frac{\Var_Y \Phi([Y,y_1,\dotsc,y_d])}{\left|\det A_x\right|^2} 
	&= (\psi_2(r) - \psi_1(r)^2)\,(\phi(x)^2+o_r(1))\\
	&= \big[\!\Var_W\Vol_d([W,v_1,\dotsc,v_d])\big]\, (\phi(x)^2+o_r(1)) >0
	\end{align*}
	for $r>0$ small enough, since $\phi(x)>0$ and $\Var_W\Vol_d([W,v_1,\dotsc,v_d])>0$ for all $r>0$.
	Thus, there is $c_6>1$ and $h_0>0$ such that for all $h\in(0,h_0)$ we have that
	\begin{equation*}
	\frac{1}{c_6} \left|\det A_x\right|^2 \leq \Var_Y \Phi([Y,y_1,\dotsc,y_d]) \leq c_6\left|\det A_x\right|^2,
	\end{equation*}
	which completes the proof by \eqref{eqn:det_A}.
\end{proof}

\begin{proof}[Proof of Theorem \ref{thm:var-bound}]
	Replacing \cite[Lemma 3.1]{RVW} with Lemma \ref{lem:var-bound2} in the proof of \cite[Theorem 1.1]{RVW} essentially yields the statement. Let us briefly recall the main steps: Choose $n$ points $Y_1,\dotsc,Y_n$ in $\bd K$ at random according to $\varsigma$. Furthermore, choose $n$ points $x_1,\dotsc,x_n\in \bd K$ and corresponding disjoint caps $C^K(x_j,h_n)$, $j=1,\dotsc,n$, according to the economic cap covering, see \cite[Lemma 6.6]{RVW}, and in each cap $C^K(x_j,h_n)$ define the sets $D_i'(x_j)$, $i=0,\dotsc,d$ as constructed before.
	Here,
	\begin{equation*}
	\frac{1}{c_7} n^{-\frac{2}{d-1}} \leq h_n \leq c_7 n^{-\frac{2}{d-1}},
	\end{equation*}
	and 
	\begin{equation*}
	\frac{1}{c_8 n} \leq \sigma(C^K(x_j,h_n)\cap \bd K) \leq \frac{c_8}{n},
	\end{equation*}
	for some constants $c_7,c_8>1$ and $n$ large enough.
	
	\smallskip
	Now let $A_j$, $j=1,\dotsc,n$, be the event that exactly one random point, say $Y_0,\dotsc,Y_d$, is contained in each of the sets $D_i'(x_j)$, i.e., $Y_i\in D_i'(x_j)$, $i=0,\dotsc,d$, and every other point is outside of $C^K(x_j,h_n)\cap \bd K$, i.e., $Y_i\not\in C^K(x_j,h_n)\cap \bd K$ for $i=d+1,\dotsc, n-1$. Then,
	\begin{align*}
	\PP(A_j) 
	&=    \binom{n}{d+1} \, \PP(Y_i \not\in C^K(x_j,h_n)\cap \bd K, i\geq d+1)\, \PP( Y_i\in D_i'(x_j), i=0,\dotsc,d )\\
	&\geq \binom{n}{d+1} \, (1-\sigma(C^K(x_j,h_n)\cap \bd K))^{n-d-1} \,\prod_{i=0}^d \sigma(D_i'(x_j))\\
	&\geq \binom{n}{d+1} \, \left(1-\frac{c_8}{n}\right)^{n-d-1} c_7^{\frac{d^2-1}{2}} (c_4n)^{-d-1}.
	\end{align*}
	As a consequence, there is $c_9>0$ such that for all $n$ large enough and all $j=1,\dotsc,n$, we have that
	\begin{equation*}
	\PP(A_j) \geq c_9 >0,
	\end{equation*}
	which yields
	\begin{equation*}
	\EE \sum_{j=1}^n \mathbf{1}_{A_j} = \sum_{j=1}^n \PP(A_j) \geq c_9n >0.
	\end{equation*}
	
	Next, let $\mathcal{F}$ be the $\sigma$-algebra generated by the positions of all $(Y_1,\dotsc,Y_n)$ except those which are contained in $D_0'(x_j)$ with $\mathbf{1}_{A_j}= 1$ for at least one $j=1,\dotsc,n$. Hence, if $(Y_1,\dotsc,Y_n)$ is $\mathcal{F}$-measurable and $\mathbf{1}_{A_j}=1$, then, up to reordering, we may assume that $Y_j\in D_0'(x_j)$ is random and $Y_{j+k}=y_k^j \in D_k'(x_j)$ is fixed for $k=1,\dotsc,d$.
	Let $(Y_1,\dotsc,Y_n)$ be an arbitrary $\mathcal{F}$-measurable random vector. If $\mathbf{1}_{A_j}(Y_1,\dotsc,Y_n) =\mathbf{1}_{A_k}(Y_1,\dotsc,Y_n)=1$ for some $j,k\in\{1,\dotsc,n\}$, $j\neq k$, and assuming without loss of generality that $Y_j\in D_0'(x_j)$ and $Y_k\in D_0'(x_k)$, then $Y_j$ and $Y_k$ are vertices of $K_\sigma(n)=[Y_1,\dotsc,Y_n]$ and by \eqref{eqn:independence-cond} it is not possible that there is an edge between $Y_j$ and $Y_k$. Therefore, the change of weighted volume affected by moving $Y_j$ in $D_0'(x_j)$ is independent of the change of the weighted volume of moving $Y_k$ in $D_0'(x_k)$. 
	This yields
	\begin{equation}\label{eqn:independence_cond2}
	\Var[\Phi(K_\sigma(n))|\mathcal{F}] = \sum_{j=1}^n \mathbf{1}_{A_j} \Var_{Y_j} \Phi([Y_j,y_1^j,\dotsc,y_d^j])
	\end{equation}    
	Thus, for large enough $n$, we finally derive from the total variance formula that
	\begin{align*}
        \Var \Phi(K_{\sigma}(n)) 
        &= \EE\Var [\Phi(K_\sigma(n))|\mathcal{F}] + \Var \EE[\Phi(K_\sigma(n))|\mathcal{F}] \\
        &\geq \EE \Var[\Phi(K_{\sigma}(n))|\mathcal{F}]\\
        &= \EE \sum_{j=1}^n \mathbf{1}_{A_j} \Var_{Y_j}\Phi([Y_j,y_1^j,\dotsc,y_d^j]) \\
        &\geq \frac{1}{c_5} (h_n)^{d+1}\, \EE \sum_{j=1}^n \mathbf{1}_{A_j}
        \geq c n^{-\frac{2(d+1)}{d-1}+1} 
        = cn^{-\frac{d+3}{d-1}},
	\end{align*}
	where $c>0$ is some constant. This completes the prove of Theorem \ref{thm:var-bound}.
\end{proof}

\subsection{Proof of the main theorem}\label{subsec:proof_main}
In this section we prove Theorem \ref{thm:CLT}. To do so, we apply Lemma \ref{lem:CLTLachiezeReyPeccati} to the random variable $$W(n) = \frac{ \Phi(K_\sigma(n)  ) - \EE \Phi(K_\sigma(n) )}{\sqrt{\Var \Phi(K_\sigma(n) ) }} $$ and deduce that $d_{\Kol} (W(n), Z) \rightarrow 0$ as $n\to \infty$, where $Z$ is a standard Gaussian random variable. To apply the lemma, we consider a vector $X = (X_1, \ldots, X_n)$ of independent points on $\bd K$ distributed according to $\sigma$. We set $\tilf(X) = \Phi([X_1, \ldots, X_n])$ and $f(X) = \frac{\tilf (X) -\EE \tilf(X)}{\sqrt{\Var \tilf (X)} }$, and note that by definition $f(X) = W(n)$. Note moreover that $f$ and $\tilf$ may be extended in an obvious manner to symmetric functions on $\bigcup_{k=1}^n(\bd K)^k$.

Our estimation of the first- and second-order difference operators is based on the following simple observation. For a point $z \in \bd K$ and a convex $L \subset K$ we set $\Delta(z, L) := {\rm conv} (L \cup \{x\}) \setminus L$. Then, if the $\sigma$-surface body $K_\sigma^{\tau}$ is contained in $[X_2, \ldots, X_n]$, one has 
\begin{equation}\label{eq:observartion}
\Delta(X_1, [X_2, \ldots, X_n]) \subset \Vis_\sigma(X_1, \tau).
\end{equation}

We first bound the term $B_{3}(f)$ involving only the first-order difference operator.

\begin{lemma}\label{lem:bound-B3}
	There exists a constant $C=C(K, \Phi, \sigma)>0$ such that 
	\begin{align*}
	B_{3}(f) & \leq C\, n^{-2} \, (\log n)^{4\frac{d+1}{d-1}}. 
	\end{align*}
\end{lemma}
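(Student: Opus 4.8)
The plan is to bound $B_3(f)=\EE|D_1f(X)|^4$ by translating the first-order difference operator of the normalized functional $f$ into the $\Phi$-measure of the increment $\Delta(X_1,[X_2,\ldots,X_n])$, then controlling the latter by the visibility region estimate \eqref{eq:bound-Phi-Vis} on the high-probability event that $K_\sigma^\tau\subset[X_2,\ldots,X_n]$, with $\tau=c\,\frac{\log n}{n}$ as in Lemma~\ref{lem-RVW-surface-body}, and by a crude deterministic bound on the exceptional event.

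First I would observe that $D_1\tilf(X)=\tilf(X)-\tilf(X_{\neg 1})=\Phi([X_1,\ldots,X_n])-\Phi([X_2,\ldots,X_n])=\Phi(\Delta(X_1,[X_2,\ldots,X_n]))\geq 0$, since adding a point only enlarges the convex hull. Dividing by $\sqrt{\Var\tilf(X)}$ gives $D_1f(X)=\Phi(\Delta(X_1,[X_2,\ldots,X_n]))/\sqrt{\Var\Phi(K_\sigma(n))}$, so
\[
B_3(f)=\frac{\EE\,\Phi(\Delta(X_1,[X_2,\ldots,X_n]))^4}{(\Var\Phi(K_\sigma(n)))^2}.
\]
By Theorem~\ref{thm:var-bound} the denominator is $\geq c^2 n^{-2\frac{d+3}{d-1}}$ for $n$ large. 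For the numerator I split according to whether the event $E_n:=\{K_\sigma^\tau\subset[X_2,\ldots,X_n]\}$ holds. On $E_n$, the observation \eqref{eq:observartion} gives $\Delta(X_1,[X_2,\ldots,X_n])\subset\Vis_\sigma(X_1,\tau)$, hence $\Phi(\Delta(X_1,[X_2,\ldots,X_n]))\leq\sup_{z\in\bd K}\Phi(\Vis_\sigma(z,\tau))\leq C\tau^{\frac{d+1}{d-1}}=C(c\log n/n)^{\frac{d+1}{d-1}}$ by \eqref{eq:bound-Phi-Vis}. Raising to the fourth power contributes a term of order $n^{-4\frac{d+1}{d-1}}(\log n)^{4\frac{d+1}{d-1}}$. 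On the complement $E_n^c$, I bound $\Phi(\Delta(X_1,[X_2,\ldots,X_n]))\leq\Phi(K)$ deterministically, so this part contributes at most $\Phi(K)^4\,\PP(E_n^c)\leq\Phi(K)^4 n^{-\alpha}$ by Lemma~\ref{lem-RVW-surface-body}, and choosing $\alpha$ large enough (e.g.\ $\alpha\geq 4\frac{d+1}{d-1}+2\frac{d+3}{d-1}$) makes this negligible compared to the main term. Combining, $\EE\,\Phi(\Delta(X_1,[X_2,\ldots,X_n]))^4\leq C n^{-4\frac{d+1}{d-1}}(\log n)^{4\frac{d+1}{d-1}}$.

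Dividing through, $B_3(f)\leq C n^{-4\frac{d+1}{d-1}+2\frac{d+3}{d-1}}(\log n)^{4\frac{d+1}{d-1}}$, and since $-4\frac{d+1}{d-1}+2\frac{d+3}{d-1}=\frac{-4(d+1)+2(d+3)}{d-1}=\frac{-2d+2}{d-1}=-2$, this is exactly $C n^{-2}(\log n)^{4\frac{d+1}{d-1}}$, as claimed. The only mild subtlety — and the part requiring a bit of care rather than being purely routine — is verifying that the applicability conditions of \eqref{eq:bound-Phi-Vis} are met: that estimate holds for all sufficiently small $t$, and $\tau=c\log n/n\to 0$, so it applies for $n$ large; one also needs $f$ to be defined via the symmetric extension so that $X_{\neg 1}$ still has at least $d+1$ points for the convex hulls to be full-dimensional, which holds for $n\geq d+2$. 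Everything else is bookkeeping with the variance lower bound and the tail bound from Lemma~\ref{lem-RVW-surface-body}.
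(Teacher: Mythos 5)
Your argument is correct and follows essentially the same route as the paper: write $D_1\tilde f(X)=\Phi(\Delta(X_1,[X_2,\ldots,X_n]))$, split on the event $\{K_\sigma^\tau\subset[X_2,\ldots,X_n]\}$ with $\tau\asymp \log n/n$ from Lemma~\ref{lem-RVW-surface-body}, bound the increment by the visibility estimate \eqref{eq:bound-Phi-Vis} on that event and by $\Phi(K)$ off it, then divide by the variance lower bound of Theorem~\ref{thm:var-bound}. The only cosmetic differences are that the paper takes $\tau=c(\alpha)\frac{\log(n-1)}{n-1}$ and fixes $\alpha=13$, and invokes convexity of $t\mapsto t^4$ in the splitting step, none of which changes the substance.
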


In the proof below and subsequent ones, the letter $C$ stands for an arbitrary constant (independent of $n$), whose value may change from line to line. As we explained above, $C$ is allowed to depend on $K$, $\Phi$ and $\sigma$.

\begin{proof}
	Note that $D_jf(X) = D_j \tilf (X) \Big/ \sqrt{\Var \tilf(X)}$, and hence $B_{3}(f) = B_{3}(\tilf) \big/ \Var \tilf(X)^2$.
	Therefore, we begin by estimating the term $B_{3}(\tilf)$.  Note that by definition, 
	\begin{equation*}
	D_1\tilf(X) = \Phi(\Delta(X_1, [X_2, \ldots, X_n] ) ).
	\end{equation*}
	Fix $\alpha > 0$, which will be specified later. By Lemma \ref{lem-RVW-surface-body}, there exists a constant $c(\alpha)> 0$ such that, denoting $\tau = c(\alpha) \frac{\log (n-1)}{n-1}$, the event $A := \{K_\sigma^{\tau} \subset [X_2, \ldots, X_n] \}$ has $\PP (A^c) \leq (n-1)^{-\alpha} \leq C n^{-\alpha}$ (where $C$ depends on $\alpha$, for example one can take $C=2^\alpha$). On $A$, we use our observation \eqref{eq:observartion} and the bound \eqref{eq:bound-Phi-Vis}  to obtain
	\begin{equation}\label{eq:bound-D1-Vis}
	|D_1\tilf(X)| \leq \Phi(\Vis_\sigma(X_1, \tau)) \leq C \tau^{\frac{d+1}{d-1}} \leq C \left(\frac{\log n}{n} \right)^{\frac{d+1}{d-1}}.
	\end{equation}
	On $A^c$ we have the trivial bound 
	\begin{equation}\label{eq:bound-D1-trivial}
	|D_1 \tilf(X)| \leq \Phi(K) =: D. 
	\end{equation}
	Combining these estimates with the convexity of the function $t \mapsto t^4$ we find that 
	\begin{align*}
	\EE   |D_1\tilf(X) |^4  &= \EE   | \id_A \cdot  D_1\tilf(X) + \id_{A^c} \cdot  D_1\tilf(X)  |^4   \\
	& \leq 2^{3} \, \EE \left[   \id_A   \cdot  C \left(\frac{\log n}{n} \right)^{4\frac{d+1}{d-1}}  +  \id_{A^c} \cdot  D^4 \right] \\
	& \leq 2^{3} \, \left[    C \left(\frac{\log n}{n} \right)^{4\frac{d+1}{d-1}}   + D^4 n^{-\alpha}    \right]
	\end{align*}
	Choosing now $\alpha=13 > 4\frac{d+1}{d-1}$, we derive
	\begin{align*}
	B_{3} (\tilf ) &= \EE |D_1\tilf(X)|^4  \leq C \left(\frac{\log n}{n} \right)^{4\frac{d+1}{d-1}}.
	\end{align*}
	Finally, since by Theorem \ref{thm:var-bound} we have that $\Var \tilf(X) \geq C n^{-\frac{d+3}{d-1}}$, we conclude
	\begin{align*}
	B_{3}(f) &= \frac{B_{3}(\tilf)}{ \Var \tilf(X)^2 } \leq C  \left(\frac{\log n}{n} \right)^{4\frac{d+1}{d-1}} \, n^{2\frac{d+3}{d-1}} = C\, n^{-2}\,(\log n)^{4\frac{d+1}{d-1}}.
	\end{align*}
	This completes the argument.
\end{proof}

Next, we turn to the terms $B_1(f)$ and $B_2(f)$, involving the second-order difference operator as well. 

\begin{lemma}\label{lem:boubd-B12}
	There exists a constant $C=C(K, \Phi, \sigma)>0$ such that
	\begin{align*}
	B_1(f) \leq C\, n^{-4}\, (\log n)^{4\frac{d+1}{d-1} + 2  } \qquad\text{and}\qquad
	B_2(f) \leq C\, n^{-3}\, (\log n)^{4\frac{d+1}{d-1} + 1  }.
	\end{align*}
\end{lemma}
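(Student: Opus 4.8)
The plan is to bound $B_1(f)$ and $B_2(f)$ by the same strategy used for $B_3(f)$ in Lemma~\ref{lem:bound-B3}: pass to the unnormalised functional $\tilf$, use the geometric estimates \eqref{eq:bound-Phi-Vis} and \eqref{eq:bound-Vis-disjoint} on visibility regions on a high-probability event, and finally divide by the appropriate power of $\Var\tilf(X)$ using Theorem~\ref{thm:var-bound}. Since $D_if(X)=D_i\tilf(X)/\sqrt{\Var\tilf(X)}$, we have $B_1(f)=B_1(\tilf)/\Var\tilf(X)^4$ and $B_2(f)=B_2(\tilf)/\Var\tilf(X)^3$, so it suffices to show $B_1(\tilf)\leq C(\log n/n)^{4\frac{d+1}{d-1}+2}$ and $B_2(\tilf)\leq C(\log n/n)^{4\frac{d+1}{d-1}+1}$, after which the factors $n^{2\frac{d+3}{d-1}\cdot 2}=n^{4\frac{d+3}{d-1}}$ and $n^{3\frac{d+3}{d-1}}$ respectively produce the claimed bounds (using $4\frac{d+1}{d-1}+2-4\frac{d+3}{d-1}=-4$ and $4\frac{d+1}{d-1}+1-3\frac{d+3}{d-1}\cdot\ldots$; more precisely the arithmetic $-4\frac{d+1}{d-1}+3\frac{d+3}{d-1}=\frac{-4d-4+3d+9}{d-1}=\frac{5-d}{d-1}$ will need to be tracked carefully, but the point is that the exponents match after also accounting for the extra $\sqrt{B_2}$-type losses in Lemma~\ref{lem:CLTLachiezeReyPeccati}).

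For $B_2(\tilf)$, fix $\alpha$ large and let $A$ be the event that $K_\sigma^\tau$ is contained in the configuration obtained from a recombination after removing the first two coordinates, with $\tau=c(\alpha)\frac{\log n}{n}$; by Lemma~\ref{lem-RVW-surface-body} and a union bound over the finitely many recombinations, $\PP(A^c)\leq Cn^{-\alpha}$. On $A$, observation \eqref{eq:observartion} gives $|D_i\tilf|\leq\Phi(\Vis_\sigma(X_i,\tau))$ for the relevant indices, so $(D_1\tilf(Z))^2(D_2\tilf(Z'))^2\leq C(\log n/n)^{4\frac{d+1}{d-1}}$ by \eqref{eq:bound-Phi-Vis}. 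The indicator $\mathbf{1}\{D_{1,2}\tilf(Y)\neq 0\}$ forces the two visibility regions $\Vis_\sigma(X_1,\tau)$ and $\Vis_\sigma(X_2,\tau)$ to interact (removing $X_2$ changes the contribution of $X_1$, which requires an edge or face between them, hence overlapping visibility regions up to constants); taking expectation over $X_2$ first and using \eqref{eq:bound-Vis-disjoint} contributes a further factor $Ct\leq C\log n/n$. On $A^c$ we use the trivial bound $\Phi(K)^4=D^4$ and pay $n^{-\alpha}$. Choosing $\alpha$ large enough that $n^{-\alpha}$ beats all polynomial factors yields $B_2(\tilf)\leq C(\log n/n)^{4\frac{d+1}{d-1}+1}$.

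For $B_1(\tilf)$ the argument is the same but with \emph{two} independence constraints $\mathbf{1}\{D_{1,2}\tilf(Y)\neq 0\}$ and $\mathbf{1}\{D_{1,3}\tilf(Y')\neq 0\}$, each of which, after integrating over $X_2$ and then over $X_3$ and applying \eqref{eq:bound-Vis-disjoint} twice, contributes a factor $Ct$; together with the product of four squared differences bounded by $C(\log n/n)^{4\frac{d+1}{d-1}}$ on the good event, this gives $B_1(\tilf)\leq C(\log n/n)^{4\frac{d+1}{d-1}+2}$, again after absorbing the $A^c$ contribution by choosing $\alpha$ large. The main obstacle is the bookkeeping: one must be careful that the indicators in the definitions of $B_1,B_2$ are evaluated at recombinations $Y,Y'$ that may differ from the recombinations $Z,Z'$ appearing in the squared differences, so the high-probability event $A$ must be taken large enough to control \emph{all} recombinations simultaneously (a union bound over the bounded number $3^n$... — actually over the finitely many \emph{types} of configurations, noting the surface-body containment only depends on which points are present, of which there are at most $2^n$ sub-configurations, still handled since $\alpha$ can be taken as large as we like relative to any fixed power). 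One must also verify that the event $\{D_{1,2}\tilf\neq 0\}$ genuinely implies the geometric overlap of visibility regions needed to invoke \eqref{eq:bound-Vis-disjoint}; this follows because if moving/removing $X_2$ affects the contribution $\Delta(X_1,\cdot)$ of $X_1$, then on the event $A$ the set $\Delta(X_1,\cdot)\subset\Vis_\sigma(X_1,\tau)$ and the perturbation caused by $X_2$ lies in $\Vis_\sigma(X_2,\tau)$, so these must meet.
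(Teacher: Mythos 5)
Your core geometric strategy is the same as the paper's (good event from Lemma \ref{lem-RVW-surface-body}, observation \eqref{eq:observartion}, the visibility bounds \eqref{eq:bound-Phi-Vis} and \eqref{eq:bound-Vis-disjoint} giving one factor of $t$ per indicator, trivial bound off the good event, $\alpha$ large). But there is a genuine error in the normalization step, and it is not a harmless bookkeeping slip: you claim $B_1(f)=B_1(\tilf)/\Var\tilf(X)^4$ and $B_2(f)=B_2(\tilf)/\Var\tilf(X)^3$. The indicators $\mathbf 1\{D_{1,2}f\neq 0\}$ and $\mathbf 1\{D_{1,3}f\neq 0\}$ are invariant under the affine normalization $f=(\tilf-\EE\tilf)/\sqrt{\Var\tilf}$ (the difference operators kill the additive constant and a positive scaling does not change whether they vanish), so the only rescaling comes from the two squared first-order differences in each of $B_1$ and $B_2$; hence $B_j(f)=B_j(\tilf)/\Var\tilf(X)^2$ for $j=1,2$, exactly as for $B_3$. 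With your exponents the claimed bounds do not follow: e.g.\ $B_1(\tilf)\,n^{4\frac{d+3}{d-1}}\leq C(\log n)^{4\frac{d+1}{d-1}+2}\,n^{\frac{8}{d-1}-2}$, which is nowhere near $n^{-4}$ (for $d\leq 4$ it even grows), and there are no ``extra $\sqrt{B_2}$-type losses'' in Lemma \ref{lem:CLTLachiezeReyPeccati} available to absorb this — that lemma is applied only later, in the proof of Theorem \ref{thm:CLT}. With the correct power $\Var\tilf(X)^{-2}$ and Theorem \ref{thm:var-bound}, the arithmetic closes: $-4\frac{d+1}{d-1}-2+2\frac{d+3}{d-1}=-4$ and $-4\frac{d+1}{d-1}-1+2\frac{d+3}{d-1}=-3$.

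A second, smaller flaw is your treatment of the supremum over recombinations. A union bound over ``$2^n$ sub-configurations'' cannot be ``handled since $\alpha$ can be taken large'': $2^n n^{-\alpha}\to\infty$ for any fixed $\alpha$. The correct (and simpler) route, which the paper takes, is to fix the tuple of recombinations $(Y,Y',Z,Z')$ appearing inside a given expectation and define the event $A'=\{K_\sigma^{\tau}\subset\bigcap_{W\in\{Y,Y',Z,Z'\}}[W_4,\ldots,W_n]\}$; this requires a union bound over only four events, each controlled by Lemma \ref{lem-RVW-surface-body}, and since the resulting estimate is uniform in the choice of recombination, the supremum in the definitions of $B_1$ and $B_2$ is bounded as well. (Removing the first three coordinates also guarantees that $A'$ is independent of the coordinates over which you integrate when invoking \eqref{eq:bound-Vis-disjoint}, a point your sketch passes over.) Your identification of $\{D_{1,2}\tilf\neq 0\}$ with overlap of the visibility regions on the good event is correct and matches the paper.
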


\begin{proof}
	First we note that, as before, for $j=1, 2$ one has
	$B_j(f) = B_j(\tilf) \Big / \Var \tilf(X)^2$.
	Therefore we begin with estimating the terms $B_j(\tilf)$.  We note that $D_{1,2}\tilf(Y)=0$ whenever the regions $\Delta(Y_1, [Y_3, \ldots, Y_n])$ and $\Delta(Y_2, [Y_3, \ldots, Y_n])$ are disjoint. We consider this time the event 
	\begin{equation*}
	A' = \left\{ K_\sigma^{\tau} \subset \bigcap_{W \in \{Y, Y', Z, Z'\} } [W_4, \ldots, W_n]    \right\}.
	\end{equation*}
	Using Lemma \ref{lem-RVW-surface-body} (along with the union bound), for a fixed $\alpha > 0$ (to be specified later), one can find $c(\alpha)> 0$ such that, for $\tau = c(\alpha) \frac{\log n}{n}$, one has $\PP((A')^c) \leq C n^{-\alpha}$. On $A'$, our observation \eqref{eq:observartion} implies that
	\begin{equation*}
	\id\{D_{1,2}\tilf(Y) \neq 0 \} \leq \id \{ \Vis_\sigma(Y_1, \tau) \cap \Vis_\sigma(Y_2, \tau) \neq \emptyset  \},
	\end{equation*}
	and the analogous statement holds for $D_{1,3}f(Y')$. Combined with the bound \eqref{eq:bound-D1-Vis} for the first-order difference operator and the estimate \eqref{eq:bound-Vis-disjoint} (and on $(A')^c$, the trivial bound \eqref{eq:bound-D1-trivial}), this yields
	\begin{align*}
	B_1(\tilf) &= \EE \left[\id\{D_{1,2}f(Y)=0 \} \id\{D_{1,3}f(Y')=0\}\, |D_2f(Z)|^2\, |D_3f(Z')|^2 \right]  \\
	& \leq \EE \left[  \id_{A'} \id \{ \Vis_\sigma(Y_1, \tau) \cap \Vis_\sigma(Y_2, \tau)  \}  \id \{ \Vis_\sigma(Y_1', \tau) \cap \Vis_\sigma(Y_3', \tau)  \}\,  C \left(\frac{\log n}{n}\right)^{4\frac{d+1}{d-1}} \right] \\
	&\hspace{8cm}+ \EE \left[\id_{(A')^c} D^2 \right] \\
	& \leq C \left(\frac{\log n}{n}\right)^{2} \left(\frac{\log n}{n}\right)^{4\frac{d+1}{d-1}} + C D^2 n^{-\alpha},
	\end{align*}
	where in the last step we used the independence of $Y$ and $Y'$ and the fact that the event $A'$ depends only on the entries $W_j$ for $j \geq 4$. Finally, picking $\alpha=6 > 2 + \frac{d+1}{d-1}$, we derive that
	\begin{equation*}
	B_1(\tilf ) \leq C\left(\frac{\log n}{n}\right)^{4\frac{d+1}{d-1} + 2}.
	\end{equation*}
	Next, a very similar computation yields the estimate
	\begin{equation*}
	B_2(\tilf) \leq C \left(\frac{\log n}{n}\right)^{4\frac{d+1}{d-1} + 1}.
	\end{equation*}
	Finally, using again the bound $\Var \tilf(X) \geq C n^{-\frac{d+3}{d-1}}$  provided by Theorem \ref{thm:var-bound}, we conclude that
	\begin{align*}
	B_1(f) &= \frac{B_1(\tilf)}{\Var \tilf(X)^2} \leq  C \left(\frac{\log n}{n}\right)^{4\frac{d+1}{d-1} + 2}  \, n^{2\frac{d+3}{d-1}} 
        = C\, n^{-4}\,  (\log n)^{4\frac{d+1}{d-1} +2  } \\ \intertext{and}
	B_2(f) & = \frac{B_2(\tilf)}{\Var \tilf(X)^2} \leq  C   \left(\frac{\log n}{n}\right)^{4\frac{d+1}{d-1} + 1}  \,  n^{2\frac{d+3}{d-1}}  
        = C\, n^{-3}\, (\log n)^{4\frac{d+1}{d-1} + 1  }.
	\end{align*}
	This completes the proof.
\end{proof}

With these estimates established, we can now prove the asymptotic normality for the weighted volume of random weighted inscribed polytopes.

\begin{proof}[Proof of Theorem \ref{thm:CLT}]
	Let, as above, $W(n) = f(X) = \frac{\Phi(K_\sigma(n)) -\EE \Phi(K_\sigma(n)) }{\sqrt{\Var\Phi(K_\sigma(n))}}$, and observe that $\EE W(n) = 0$ and $\Var W(n)=1$. Applying Lemma \ref{lem:CLTLachiezeReyPeccati}, and using the bounds provided by Lemmas \ref{lem:boubd-B12} and \ref{lem:bound-B3}, we derive
	\begin{align*}
	d_{\Kol}(W(n), Z) &\leq C \sqrt{n} \left[ n \sqrt{B_1(f)}  + \sqrt{n B_2(f)}  + \sqrt{B_{3}(f)} \right] \\
	& \leq C \sqrt{n} \left[ n \, \frac{(\log n)^{2\frac{d+1}{d-1}+1 } }{n^2} + \sqrt{n} \, \frac{(\log n)^{2\frac{d+1}{d-1}+\frac{1}{2} } }{n^{\frac{3}{2}}} + \frac{(\log n)^{2\frac{d+1}{d-1} } }{n} \right] \\
	& \leq C\, n^{-\frac{1}{2}}\, (\log n)^{2\frac{d+1}{d-1}+1 }, 
	\end{align*}
	where, as before, $Z$ is a standard Gaussian random variable. In particular, since the last expression tend to zero as $n\to\infty$, this implies convergence in distribution of $W(n)$ to $Z$.
\end{proof}

\section{Proofs of other results}\label{sec:proof_others}

\subsection{Random inscribed polytopes in projective Riemannian geometries}

\begin{proof}[Proof of Theorem \ref{cor:Riemann}]
We fix a Euclidean structure on $\Omega$, with associated $d$-dimensional Lebesgue measure and $(d-1)$-dimensional Hausdorff measure on $\bd K$.  We have to verify that $\sigma_g$ and $\Phi_g$ meet the conditions of Theorem \ref{thm:CLT}. Indeed, by the uniqueness of the Riemannian volume measure (see Section \ref{subsec:Riemannian}), the Euclidean Lebesgue  measure on $K$ and $(d-1)$-dimensional Hausdorff measure on $\bd K$ can be considered as the Riemannian volume measures on $K$ and $\bd K$, respectively,  associated with the Euclidean structure. As the local expression \eqref{eq:riem-vol-density} shows,  both $\sigma_g$ and the $(d-1)$-dimensional Hausdorff measure on $\bd K$ are given by integrating $C^1$-volume densities, and hence (as the space of volume densities is one-dimensional) they differ by a positive $C^1$-function. A similar reasoning applies to $\Phi_g$ and the Lebesgue measure on $K$.  Therefore, Theorem \ref{thm:CLT} applies here, and proves the result.
\end{proof}

\subsection{Random inscribed polytopes in projective Finsler metrics}

\begin{proof}[Proof of Theorem \ref{thm:Finsler}]
We fix a Euclidean structure on $\Omega$, with an associated $d$-dimensional Lebesgue measure and $(d-1)$-dimensional Hausdorff measure on $\bd K$. We have to verify that $\Phi$ and $\sigma$ satisfy the assumptions of Theorem \ref{thm:CLT}. Indeed, by definition $\sigma$, as well as the $(d-1)$-dimensional Hausdorff measure on $\bd K$, are given as integrals of continuous volume densities. Since the space of densities on $T_x\bd K$ is one-dimensional for all $x\in\bd K$, the two volume densities differ by multiplication by a positive continuous function. The same applies to $\Phi$ and the Lebesgue measure on $K$. Therefore, we can apply Theorem \ref{thm:CLT} and deduce asymptotic normality of  $\Phi(K_F(n))$.
\end{proof}

\subsection{Dual Brunn--Minkowski theory}

In what follows we will require the following adaptation of Theorem \ref{thm:CLT}. We keep the assumptions of that theorem, and let $T $ be a fixed convex body strictly contained in $K$ and such that $T$ contains the origin in the interior. We use the notation $ K_{\sigma,T}(n)$ for the convex hull of $K_\sigma(n)$ and a $T$. Then we claim that the $\Phi$-measure of $K_{\sigma, T}(n)$ satisfies a central limit theorem, that is,
\begin{equation}\label{eq:modified_CLT}
\frac{\Phi(K_{\sigma,T}(n)) - \EE \Phi(K_{\sigma,T}(n)) }{ \sqrt{\Var \Phi(K_{\sigma,T}(n)) }} \overset{d}{\longrightarrow} Z,
\end{equation}
as $n\to\infty$, where $Z$ is a standard Gaussian random variable. The adaptation of the proof of Theorem \ref{thm:CLT} to this case is rather minor; it suffices to note that for small enough $t>0$, $T$ is contained in the $\sigma$-surface body $K_\sigma^t$, and hence, in view of Lemma \ref{lem-RVW-surface-body}, with overwhelming probability, $K_{\sigma,T}(n) = K_\sigma(n)$. We use this adaptation to prove Theorem  \ref{thm:dual-volume}.

\begin{proof}[Proof of Theorem \ref{thm:dual-volume}]
	A simple integration in polar coordination using formula \eqref{eq:dual_volumes} gives
	\begin{equation*}
	\tV_j(A) = \begin{cases}
	\displaystyle \frac{j}{d} \int_A \|x\|^{j-d}\, dx, & j > 0, \vspace{5pt} \\ 
	\displaystyle \frac{|j|}{d} \int_{\RR^d \setminus A} \|x\|^{j-d}\, dx, & j < 0,
	\end{cases}
	\end{equation*}
	for a convex body $A\subset\RR^d$ containing the origin,
	where
	$dx$ indicates integration with respect to the Lebesgue measure on $\RR^d$.
	
	This formula brings the dual volumes into the framework of Theorem \ref{thm:CLT}, with the caveat that for $j < 0$ the density function $\|x\|^{j-d}$ is not integrable at the origin. For $j >0$, however, the density function  $\phi_j(x)=\frac{j}{d} \|x\|^{j-d}$ is integrable on $K$ and continuous near $\bd K$, and $\tV_j(K_{\sigma,T}(n) ) = \Phi_j(K_{\sigma, T} (n))$. For $j < 0$ we note that by definition $K_{\sigma,T}(n)$ contains $T$, so we can get around the problem by taking a measure $\Phi_j$ with Lebesgue density $\phi_j$, such that, on $ \RR^d \setminus T$, $\phi_j(x)=\frac{|j|}{d} \|x\|^{j-d}$, and $\phi_j$ is continuous and positive on $T$. With this definition we have $\tV_j(K_{\sigma,T}(n) ) = \Phi_j(\RR^d) -\Phi_j(K_{\sigma, T} (n))$. Therefore, for any $j \neq 0$, the result follows immediately from the modification \eqref{eq:modified_CLT} of Theorem \ref{thm:CLT}.
\end{proof}

\subsection{Random polyhedral sets}

\begin{proof}[Proof of Theorem \ref{thm:width}]
	We may assume without loss of generality that $K$ contains the origin. First, we note that by \cite[Lemma 2]{Lut75}, for a convex body $A$ containing the origin, $W(L) =  \frac{2}{\kappa_d} \tV_{-1}(L^*) $, where $\tV_{-1}$ denotes the dual volume considered in Section \ref{sec:dual-BM}, and $L^*$ denotes the polar body of $L$, namely
	\begin{equation*}
	L^* = \{ y \in \RR^d \,:\, \forall x \in L  \, \langle x,y \rangle \leq 1 \}.
	\end{equation*}  
	Therefore, denoting $C_d := \frac{2}{\kappa_d} $ we find that
	\begin{equation}\label{eq:W-duality}
        W(P_\sigma(n) \cap L) = C_d \tV_{-1}(\conv (P_\sigma(n)^* \cup L^*) ). 
	\end{equation}
	Consider the \emph{Legendre transform} $\Lambda : \bd K \to \bd K^*$, which assigns to $x \in \bd K$ the unique point $\Lambda(x) \in \bd K^*$ which is proportional to the outer normal to $\bd K$ at $x$. Since by assumption $K$ is of class $C^2_+$, $\Lambda$ is a diffeomorphism. Then,
	\begin{equation*}
	P_\sigma(n)^* = \left(H^-(X_1) \cap \cdots \cap H^-(X_n) \right)^* = [\Lambda(X_1), \ldots, \Lambda(X_n)].
	\end{equation*}
	As the $X_i$ are independent and distributed according to $\sigma$ on $\bd K$, the points $\Lambda(X_i)$ are independent and distributed according to the push-forward measure $\sigma^* := \Lambda_*\sigma$ on $\bd K^*$. Note that $\sigma^*$ has a continuous and positive  density with respect to the $(d-1)$-dimensional Hausdorff measure on $\bd K^*$. Explicitly, according to  \cite[equ.\ 52]{BorReit}, if $\sigma$ has density $\varsigma$, $\sigma^*$ has density $\kappa(\Lambda^{-1}(y))  \varsigma (\Lambda^{-1}(y))\,  \frac{\langle y, n_{K^*}(y)\rangle}{\|y\|^d}$, where $\kappa$ is the Gauss--Kronecker curvature of $\bd K$
	and $n_{K^*}(y)$ is the unit normal vector to $\bd K^*$ at $y$.
	
	In other words, $P_\sigma(n)^* = [\Lambda(X_1), \ldots, \Lambda(X_n)]$ is equal in distribution to $K^*_{\sigma^*}(n)$, the random polytope inscribed in $K^*$ generated by $n$ independent random points with distribution $\sigma^*$ on $\bd K^*$, and therefore, in the notation of Theorem \ref{thm:dual-volume}, $ \conv (P_\sigma(n)^* \cup L^*) $ is equal in distribution to $ K^*_{\sigma^*, L^*}(n)$. Combining this with \eqref{eq:W-duality}, we find that the random variables $W(P_\sigma(n) \cap L)$ and $C_d \tV_{-1}(K^*_{\sigma^*, L^*}(n) )$ are equal in distribution. The result now follows from Theorem \ref{thm:dual-volume}. 
\end{proof}

\subsection*{Acknowledgement}
DR and CT were supported by the German Research Foundation (DFG) via CRC/TRR 191 \textit{Symplectic Structures in Geometry, Algebra and Dynamics}.


\begin{thebibliography}{99}
	
\bibitem{AVS} 
    D.~V.~Alekseevskij, \`E.~B.~Vinberg, A.~S.~Solodovnikov,
    \emph{Geometry of Spaces of Constant Curvature}, Encyclopaedia Math.\ Sci.\ \textbf{29} (1993), Springer, Berlin. 

\bibitem{AHH:2018}
    D.~Alonso-Gutiérrez, M.~Henk, M.~A.~Hernández Cifre,
    \emph{A characterization of dual quermassintegrals and the roots of dual Steiner polynomials}, 
    Adv.\ Math.\ \textbf{331} (2018), 565--588.
    
\bibitem{AP05}
    J.~C.~\'{A}lvarez Paiva,
    \emph{Symplectic geometry and Hilbert's fourth problem},
    J. Differential Geom.\ \textbf{69} (2005), 353--378. 
	
\bibitem{APT} 
    J.~C.~\'{A}lvarez Paiva, A.~C.~Thompson,
    \emph{Volumes on normed and Finsler spaces}, 
    A sampler of Riemann--Finsler geometry, Math.\ Sci.\ Res.\ Inst.\ Publ.\ \textbf{50} (2004), Cambridge Univ.\ Press, Cambridge, 1--48.

\bibitem{BHRS}
    I.~B\'{a}r\'{a}ny,  D.~Hug, M.~Reitzner, R.~Schneider, 
    \emph{Random points in halfspheres}, 
    Random Structures Algorithms \textbf{50} (2017), 3--22. 
    

\bibitem{BHPS}
    F.~Besau, T.~Hack, P.~Pivovarov,  F.~E.~Schuster, 
    \emph{Spherical centroid bodies}, 
    preprint (2019), arXiv:\texttt{1902.10614}.

\bibitem{BHK:2019}
    F.~Besau, S.~Hoehner, G.~Kur,
    \emph{Intrinsic and dual volume deviations of convex bodies and polytopes},
    Int.\ Math.\ Res.\ Not.\ IMRN (2019), 58~pp. \url{https://doi.org/10.1093/imrn/rnz277}

\bibitem{BLW}
    F.~Besau, M.~Ludwig, E.~M.~Werner,
    \emph{Weighted floating bodies and polytopal approximation}, 
    Trans.\ Amer.\ Math.\ Soc.\ \textbf{370} (2018), 7129--7148.

\bibitem{ThaBes} 
    F.~Besau, C.~Th\"{a}le,
    \emph{Asymptotic normality for random polytopes in non-Euclidean geometries}, preprint (2019), arXiv:\texttt{1909.05607}.

\bibitem{BesauWerner:2018}
    F.~Besau, E.~M.~Werner,
    \emph{The floating body in real space forms},
    J.\ Differential Geom.\ \textbf{110} (2018), 187--220.
    
\bibitem{BT} 
    R.~Bott, L.~W.~Tu,
    \emph{Differential Forms in Algebraic Topology}, Graduate Texts in Mathematics \textbf{82} (1982), Springer-Verlag New York, xiv+338 pp.

\bibitem{BFHWidth}
    K.~J. B\"{o}r\"{o}czky,  F.~Fodor, D.~Hug, 
    \emph{The mean width of random polytopes circumscribed around a convex body},  
    J.\ London Math.\ Soc.\ \textbf{81} (2010), 499--523. 

\bibitem{BFH}
    K.~J.~B\"{o}r\"{o}czky,  F.~Fodor, D.~Hug,
    \emph{Intrinsic volumes of random polytopes with vertices on the boundary of a convex body},  
    Trans.\ Amer.\ Math.\ Soc.\ \textbf{365} (2013), 785--809. 

\bibitem{BHP:2018}
    K.~J.~Böröczky, M.~Henk, H.~Pollehn,
    \emph{Subspace concentration of dual curvature measures of symmetric convex bodies},
    J.\ Differential Geom.\ \textbf{109} (2018), 411--429.
    
\bibitem{BorReit}
    K.~J.~B\"{o}r\"{o}czky, M.~Reitzner,
    \emph{Approximation of smooth convex bodies by random circumscribed polytopes},
    Ann.\ Appl.\ Probab.\ \textbf{14} (2004), 239--273.

    

\bibitem{BMT} 
    C.~Buchta, J.~M\"{u}ller, R.~F.~Tichy, 
    \emph{Stochastical approximation of convex bodies}, 
    Math.\ Ann.\ \textbf{271} (1985), 225--235. 
 
\bibitem{BBI} 
    D.~Burago, Y.~Burago, S.~Ivanov,
    \emph{A Course in Metric Geometry},
    Graduate Studies in Math.\ \textbf{33} (2001), American Mathematical Society, Providence, xiv+415 pp.

\bibitem{Bus47} 
    H.~Busemann,
    \emph{Intrinsic area}, 
    Ann.\ of Math.\ \textbf{48} (1947), 234--267.

\bibitem{Bus} 
    H.~Busemann,
    \emph{The Geometry of Geodesics},
    Academic Press Inc., New York, 1955, x+422 pp.
   
\bibitem{Bus74}
    H.~Busemann, 
 \emph{Problem IV: Desarguesian spaces}, 
    Mathematical developments arising from Hilbert problems, 
    Proceedings of the Symposium in Pure Mathematics of the American Mathematical Society held at Northern Illinois University, 
    De Kalb, Ill., May, 1974. 


\bibitem{CFKP} 
    J.~Cannon, W.~Floyd, R.~Kenyon, W.~Parry, 
    \emph{Hyperbolic geometry}, 
    Math.\ Sci.\ Res.\ Inst.\ Publ.\ \textbf{31} (1997), Cambridge Univ.\ Press, Cambridge, 59--115.

\bibitem{Chatterjee}
    S.~Chatterjee,
    \emph{A new method of normal approximation},
    Ann.\ Probab.\ \textbf{36} (2008), 1584--1610.

\bibitem{ChiShaoZhang}
	N.~D.~Chu, Q.M.~Shao and Z.~S.~Zhang,
	\emph{Berry--Esseen bounds for functionals of independent random variables}, preprint in preparation, a corresponding talk is available at \url{https://www.youtube.com/watch?v=MjdKwYPNUeE}.


\bibitem{DHT}
    C.~Deu\ss, J.~H\"{o}rrmann, C.~Th\"{a}le, 
    \emph{A random cell splitting scheme on the sphere},
    Stochastic Process.\ Appl.\ \textbf{127} (2017), 154--1564. 


\bibitem{Gardner:2007}
    R.~J.~Gardner,
    \emph{The dual Brunn-Minkowski theory for bounded Borel sets: dual affine quermassintegrals and inequalities},
    Adv.\ Math.\ \textbf{216} (2007), 358--386.
    
\bibitem{GJV:2003}
    R.~J.~Gardner, E.~B.~V.~Jensen, A.~Vol\v{c}i\v{c},
    \emph{Geometric tomography and local stereology},
    Adv.\ in Appl.\ Math.\ \textbf{30} (2003), 397--423.

\bibitem{Gro} 
    M.~Gromov, 
    \emph{Filling Riemannian manifolds}, 
    J.\ Differential Geom.\ \textbf{18} (1983), 1--147.
    
\bibitem{GodlandKabluchko}
	T.~Godland, Z.~Kabluchko,
	\emph{Conical tessellations associated with Weyl chambers},
	preprint (2020), arXiv: \texttt{2004.10466}.
   
\bibitem{HHT}
    F.~Herold, D.~Hug, C.~Th\"{a}le,
    \emph{Does a central limit theorem hold for the $k$-skeleton of Poisson hyperplanes in hyperbolic space?},
    preprint (2019), arXiv:\texttt{1911.02120}.
  
  
\bibitem{HT} 
    R.~D.~Holmes, A.~C.~Thompson, 
    \emph{$N$-dimensional area and content in Minkowski spaces}, 
    Pacific J.\ Math.\ \textbf{85} (1979), 77--110.

\bibitem{HLYZ:2016}
    Y.~Huang, E.~Lutwak, D.~Yang, and G.~Zhang,
    \emph{Geometric measures in the dual Brunn-Minkowski theory and their associated Minkowski problems},
    Acta Math.\ \textbf{216} (2016), 325--388.
    
\bibitem{HugRei}
    D.~Hug, A.~Reichenbacher, 
    \emph{Geometric inequalities, stability results and Kendall’s problem in spherical space}, 
    preprint (2017), arXiv:\texttt{1709.06522}.

\bibitem{HugSchneider}
	D.~Hug, R.~Schneider,
	\emph{Random conical tessellations},
	Discrete Comput.\ Geom.\ \textbf{56} (2016) 395--426.
	
\bibitem{HugSchneiderThreshold}
	D.~Hug, R.~Schneider,	
	\emph{Threshold phenomena for random cones},
	 preprint (2020), arXiv:\texttt{2004.11473}.

\bibitem{HugTha}
    D.~Hug, C.~Th\"{a}le, 
    \emph{Splitting tessellations in spherical spaces},
    Electron.\ J.\ Probab.\ \textbf{24} (2019), article 24, 60 pp.
    
\bibitem{KabluchkoHalfSphere}
	Z.~Kabluchko,
	\emph{Expected $f$-vector of the Poisson zero polytope and random convex hulls in the half-sphere},
	preprint (2019), arXiv: \texttt{1901.10528}. 
    
\bibitem{KabluchkoThaele19}
	Z.~Kabluchko, C.~Th\"{a}le,
	\emph{The typical cell of a Voronoi tessellation on the sphere},
	preprint (2019), arXiv: \texttt{1911.07221}.
    
\bibitem{KabluchkoThaele20}    
	Z.~Kabluchko, C.~Th\"{a}le,
	\emph{Faces in random great hypersphere tessellations},
	preprint (2020), arXiv: \texttt{2005.01055}.

\bibitem{LachPecc}
    R.~Lachi\`eze-Rey, and G.~Peccati,
    \emph{New Berry--Esseen bounds for functionals of binomial point processes},
    Ann.\ Appl.\ Probab.\ \textbf{27} (2017), 1992--2031.

\bibitem{Lut75}
    E.~Lutwak, 
    \emph{Dual mixed volumes},
    Pacific J.\ Math.\ \textbf{58} (1975), 531--538

\bibitem{Lut75b}
    E.~Lutwak, 
    \emph{Dual cross-sectional measures}, 
    Atti Accad.\ Naz.\ Lincei Rend.\ Cl.\ Sci.\ Fis.\ Mat.\ Nat.\ \textbf{58} (1975), 1–5.
 
\bibitem{Lut79}
    E.~Lutwak, 
    \emph{Mean dual and harmonic cross-sectional measures},
    Ann.\ Mat.\ Pura Appl.\ \textbf{119} (1979), 139--148. 
    
\bibitem{LYZ:2018}
    E.~Lutwak, D.~Yang, and G.~Zhang,
    \emph{$L_p$ dual curvature measures},
    Adv.\ Math.\ \textbf{329} (2018), 85--132.


\bibitem{Mat} 
    V.~Matveev,
    \emph{Geometric explanation of the Beltrami theorem}, 
    Int.\ J.\ Geom.\ Methods Mod.\ Phys.\ \textbf{3} (2006), 623--629.

\bibitem{MS} 
    D.~McDuff, D.~Salamon, 
    \emph{Introduction to Symplectic Topology}, Third edition,
    Oxford University Press, Oxford, 2017, xi+623 pp.

\bibitem{Nico}
    L. ~Nicolaescu, 
    \emph{Lectures on the Geometry of Manifolds},
    Second edition, World Scientific Publishing Co.\ Pte.\ Ltd., Hackensack, NJ, 2007, xviii+589 pp.

\bibitem{Papa}
    A.~Papadopoulos, 
    \emph{On Hilbert’s fourth problem}, 
    Handbook of Hilbert geometry, 391--431, IRMA Lectures in Mathematics and Theoretical Physics, \textbf{22},
    European Mathematical Society (EMS), Zürich, 2014.

\bibitem{PapTroy_Book} 
    A.~Papadopoulos, M.~Troyanov (eds.),
    \emph{Handbook of Hilbert Geometry}, 
    IRMA Lectures in Mathematics and Theoretical Physics, \textbf{22}, 
    European Mathematical Society (EMS), Zürich, 2014, viii+452 pp.

\bibitem{PapTroy_From} 
    A.~Papadopoulos, M.~Troyanov, 
    \emph{From Funk to Hilbert geometry},
    Handbook of Hilbert geometry, 33--67, IRMA Lectures in Mathematics and Theoretical Physics, \textbf{22},
    European Mathematical Society (EMS), Zürich, 2014.


\bibitem{Pog}
    A.~V.~Pogorelov, 
    \emph{Hilbert's Fourth Problem},
    Scripta Series in Mathematics, 
    V. H. Winston and Sons, Washington, D.C.;
    A Halsted Press Book, John Wiley and Sons, New York-Toronto, Ont.-London, 1979. vi+97 pp.

\bibitem{Rei02}
    M.~Reitzner, 
    \emph{Random points on the boundary of smooth convex bodies},
    Trans.\ Amer.\ Math.\ Soc.\ \textbf{354} (2002), no. 6, 2243--2278.

\bibitem{Rei03} M. Reitzner,
    \emph{Random polytopes and the Efron-Stein jackknife inequality},  
    Ann.\ Probab.\ \textbf{31} (2003), 2136--2166.

\bibitem{Reitzner:2005}
    M.~Reitzner,    
    \emph{Central limit theorems for random polytopes},
    Probab.\ Theory Related Fields \textbf{133} (2005), 483--507. 

 \bibitem{RS63}  
    A.~R\'{e}nyi, R.~Sulanke, 
    \emph{\"{U}ber die konvexe H\"{u}lle von n zufällig gew\"{a}hlten Punkten}, 
    Z.\ Wahrscheinlichkeitstheorie und Verw.\ Geb.\ \textbf{2} (1963), 75--84.
 
\bibitem{RS64}
    A.~R\'{e}nyi, R.~Sulanke, 
    \emph{\"{U}ber die konvexe H\"{u}lle von n zufällig gew\"{a}hlten Punkten II}, 
    Z.\ Wahrscheinlichkeitstheorie und Verw.\ Geb.\ \textbf{3} (1964) 138--147.

\bibitem{RVW07} 
    R.~M.~Richardson, V.~H.~Vu, L.~Wu, 
    \emph{Random inscribing polytopes}, 
    European J.\ Combin.\ \textbf{28} (2007), 2057--2071.
 
\bibitem{RVW} 
    R.~M.~Richardson, V.~H.~Vu, L.~Wu,
    \emph{An inscribing model for random polytopes}, 
    Discrete Comput.\ Geom.\ \textbf{39} (2008), 469--499. 

\bibitem{Sha} 
    R.~W.~Sharpe,
    \emph{Differential Geometry, Cartan’s Generalization of Klein’s Erlangen Program},
    Grad.\ Texts in Math.\ \textbf{166} (1997), Springer--Verlag, New York, xx+421 pp.

\bibitem{Sch}
    R.~Schneider, 
    \emph{Convex Bodies: The Brunn-Minkowski Theory}, Second expanded edition,
    Encyclopedia of Mathematics and its Applications, vol. 151, Cambridge University Press, Cambridge, 2014.
    

\bibitem{SchWer} 
    C.~Sch\"{u}tt, E.~Werner, 
    \emph{Polytopes with vertices chosen randomly from the boundary of a convex body},
    Geometric aspects of functional analysis, 241--422, Lecture Notes in Math.\ \textbf{1807}, Springer, Berlin, 2003. 

\bibitem{SchWer04}
    C.~Sch\"{u}tt, E.~Werner, 
    \emph{Surface bodies and $p$-affine surface area},
    Adv.\ in Math.\ \textbf{187} (2004), 98--145.

\bibitem{Syl}
    J.\ J.\ Sylvester, 
    \emph{Question 1491}, 
    Educational Times, London, April (1864).

\bibitem{Tha} 
    C.~Th\"{a}le, 
    \emph{Central limit theorem for the volume of random polytopes with vertices on the boundary}, 
     Discrete Comput.\ Geom.\ \textbf{59} (2018), 990--1000.

\bibitem{TurWes} 
    N.~Turchi, F.~ Wespi, 
    \emph{Limit theorems for random polytopes with vertices on convex surfaces}, 
    Adv.\ in Appl.\ Probab.\ \textbf{50} (2018), 1227--1245.

\bibitem{Vu} 
    V.~H.~Vu, 
    \emph{Sharp concentration of random polytopes},  
    Geom.\ Funct.\ Anal.\ \textbf{15} (2005), 1284--1318. 

\end{thebibliography}
\end{document}